\numberwithin{equation}{section}
\newtheorem{theorem}{Theorem}[section]
\newtheorem{defi}[theorem]{Definition}
\newtheorem{remark}[theorem]{Remark}
\newtheorem{corollary}[theorem]{Corollary}
\def\Rn{{\mathbb R}^n}
\def\R2n{{\mathbb R}^{2n}}
\def\Rn{{\mathbb R}^n}
\def\R2{{\mathbb R}^2}
\def\R2n{{\mathbb R}^{2n}}
\def\HS{{\mathtt{HS}}}
\def\N0{{\mathbb N}_{0}}
\def\onu{{1/\nu}}
\def\paal{{\partial^{\alpha}}}
\begin{document}

\title[Eigenfunction expansions of ultradifferentiable functions]
{Eigenfunction expansions of ultradifferentiable functions and 
ultradistributions}
\author[Aparajita Dasgupta]{Aparajita Dasgupta}

\address{
  Aparajita Dasgupta:
  \endgraf
  \'Ecole polytechnique f\'ed\'erale de Lausanne
  \endgraf
  Facult\'e des Sciences
  \endgraf
  CH-1015 Lausanne 
  \endgraf
  Switzerland
  \endgraf
  {\it E-mail address} {\rm aparajita.dasgupta@epfl.ch}
  }

\author[Michael Ruzhansky]{Michael Ruzhansky}
\address{
  Michael Ruzhansky:
  \endgraf
  Department of Mathematics
  \endgraf
  Imperial College London
  \endgraf
  180 Queen's Gate, London SW7 2AZ 
  \endgraf
  United Kingdom
  \endgraf
  {\it E-mail address} {\rm m.ruzhansky@imperial.ac.uk}
  }
  
  \dedicatory{Dedicated to the memory of Todor Gramchev (1956-2015)}

\thanks{The second
 author was supported by the EPSRC Grant EP/K039407/1 and by 
 the Leverhulme Research Grant RPG-2014-02.}
\date{\today}

\subjclass{Primary 46F05; Secondary 22E30}
\keywords{Gevrey spaces; ultradistributions; Komatsu classes.}

\begin{abstract}
In this paper we give a global characterisation of classes of ultradifferentiable
functions and corresponding ultradistributions on a compact manifold $X$.
The characterisation is given in terms of the eigenfunction expansion of an 
elliptic operator on $X$. This extends the result for analytic functions on
compact manifolds by Seeley \cite{see:exp}, and the characterisation
of Gevrey functions and
Gevrey ultradistributions on compact Lie groups and homogeneous spaces by 
the authors \cite{Dasgupta-Ruzhansky:BSM}.
\end{abstract}

\maketitle

\section{Introduction}

Let $X$ be a compact analytic manifold and let $E$ be an analytic, elliptic, positive differential 
operator of order $\nu.$ 
Let $\{\phi_j\}$ and $\{\lambda_j\}$ be respectively the eigenfunctions and eigenvalues of
$E,$ i.e. $E\phi_j=\lambda_j\phi_j.$
Then acting by $E$ on a smooth function $$f=\sum_{j}f_j\phi_j$$ we see that it is 
analytic if and only if that there is a constant $C>0$ such that for all $k\geq 0$
$$
\sum_{j} \lambda_j^{2k}|f_j|^2\leq ((\nu k)!)^2C^{2k+2}.
$$ 
Consequently, Seeley has shown in \cite{see:exp} that 
$f=\sum_{j}f_j\phi_j$ is analytic if and only if 
the sequence $\{A^{\lambda_j^{\onu}}f_j\}$ is bounded for some $A>1$.

The aim of this paper is to extend Seeley's characterisation to classes more general than that
of analytic functions. In particular, the characterisation we obtain will cover Gevrey spaces
$\gamma^{s}$, $s\geq 1$,
extending also the characterisation that was obtained previously by the authors in
\cite{Dasgupta-Ruzhansky:BSM} in the setting of compact Lie groups and compact
homogeneous spaces. The characterisation will be given in terms of the behaviour
of coefficients of the series expansion of functions with respect to the eigenfunctions
of an elliptic positive pseudo-differential operator $E$ on $X$, similar to 
Seeley's result in \cite{see:exp} (which corresponds to the case $s=1$), 
with a related construction in
\cite{see:ex}.
Interestingly, our approach allows one to define and analyse analytic or Gevrey functions
even if the manifold $X$ is `only' smooth. It also applies to quasi-analytic,
non-quasi-analytic, and other classes of functions.

We  also analyse dual spaces,
compared to Seeley who restricted his analysis to analytic spaces only
(and this will require the analysis of so-called $\alpha$-duals).

Global characterisations as obtained in this paper have several applications.
For example, the Gevrey spaces appear naturally when dealing with weakly 
hyperbolic problems, such as the wave equation for sums of squares of
vector fields satisfying H\"ormander's condition, also with the time-dependent
propagation speed of low regularity. In the setting of compact Lie groups 
the global characterisation of Gevrey spaces that has been obtained
by the authors in \cite{Dasgupta-Ruzhansky:BSM} has been further applied
to the well-posedness of Cauchy problems associated to sums of squares 
of vector fields in \cite{Garetto-Ruzhansky:wave-eq}. In this setting the Gevrey
spaces appear already in $\Rn$ and come up naturally in energy
inequalities. There are other applications, in particular in the theory of
partial differential equations, see e.g. Rodino \cite{Rodino:bk-Gevrey}.

More generally, our argument will give a characterisation
of functions in Komatsu type classes resembling but more general than those
introduced by Komatsu in
\cite{Komatsu:ultra-I,Komatsu:ultra-II}.
Consequently, we give a characterisation of the corresponding
dual spaces of ultradistributions.
We discuss Roumieu and Beurling type (or injective and projective limit, respectively)
spaces as well as their duals and $\alpha$-duals in the sense of K\"othe
\cite{Kothe:BK-top-vector-spaces-I} which also turn out to be perfect spaces.  

In the periodic setting (or in the setting of functions on the torus) different function spaces
have been intensively studied in terms of their Fourier coefficients.
Thus, periodic Gevrey functions have been discussed in terms of their Fourier coefficients
by Taguchi \cite{Taguchi:torus-YMJ-1987}
(this was recently extended to general compact Lie groups and homogeneous
spaces by the authors in \cite{Dasgupta-Ruzhansky:BSM}), see also
Delcroix, Hasler, Pilipovi\'c and Valmorin \cite{Delcroix-Hasler-Pilipovic:periodic}
for the periodic setting. 

More general periodic Komatsu-type classes have been considered by
Gorbachuk \cite{Gorbachuk:Komatsu-UMJ-1982}, with tensor product structure
and nuclearity properties analysed by Petzsche \cite{Petzsche:MM-1979}.
See also Pilipovi\'c and Prangoski \cite{Pilipovic-Prangoski:Roumieu-MM-2014}
for the relation to their convolution properties. Regularity properties of spaces
of ultradistributions have been studied by 
Pilipovi\'c and Scarpalezos \cite{Pilipovic-Scarpalezos:PAMS-2001}. We can refer to
a relatively recent book by 
Carmichael, Kami\'nski and Pilipovi\'c \cite{Carmichael-Kamiski-Pilipovic:BK}
for more details on these and other properties of ultradistributions on 
$\Rn$ and related references.

The paper is organised as follows. In Definition \ref{DEF:Komatsu} we introduce the class
$\Gamma_{\{M_k\}}(X)$ of ultradifferentiable functions on a manifold $X$, and in
Theorem \ref{PROP:Komatsu} we give several alternative reformulations and further
properties of these spaces. 
In Theorem \ref{THM:Komatsu} we characterise these classes in terms of
eigenvalues of a positive elliptic pseudo-differential operator $E$ on $X$.
Consequently, we show that since
the spaces of Gevrey functions $\gamma^{s}$ on $X$
correspond to the choice $M_{k}=(k!)^{s}$, in Corollary \ref{COR:Gevrey} 
we obtain a global characterisation for Gevrey spaces $\gamma^{s}(X)$.
These results are proved in Section \ref{SEC:proofs}.
Furthermore, in Theorem \ref{THM:ultra} we describe the eigenfunction
expansions for the corresponding spaces of ultradistributions.
This is achieved by first characterising the $\alpha$-dual spaces in
Section \ref{SEC:alpha} and then relating the $\alpha$-duals to
topological duals in Section \ref{SEC:ultradistributions}. 
In Theorem \ref{THM:ultra} we describe the counterparts of the obtained results for
the Beurling-type spaces of ultradifferentiable functions and ultradistributions.

We will denote by $C$ constants, taking different values sometimes even in the
same formula. We also denote $\N0:=\mathbb N\cup\{0\}$.

\section{Formulation of results on compact manifolds}
\label{SEC:Komatsu}

Let $X$ be a compact $C^{\infty}$ manifold of dimension $n$ without boundary and with a fixed measure. 
We fix $E$ to be a positive elliptic pseudo-differential operator of an integer
order $\nu\in\mathbb N$ on $X$, and we 
write $$E\in\Psi^{\nu}_{+e}(X)$$ in this case. 
For convenience we will assume that $E$ is classical
(although this assumption is not necessary).
The eigenvalues of $E$ form a sequence 
$\{\lambda_{j}\}$, and for each eigenvalue $\lambda_j$ we denote the 
corresponding eigenspace by $H_j$. 
We may assume that $\lambda_{j}$'s are ordered as
$$0<\lambda_{1}<\lambda_{2}<\cdots.$$
The space $H_{j}\subset L^{2}(X)$ consists of 
smooth functions due to the ellipticity of $E.$ 
We set
$$
d_j:=\dim H_j, \; H_0:=\ker E, \; \lambda_0:=0, \; d_0:=\dim H_0.
$$
Since the operator $E$ is elliptic, it is Fredholm, hence also $d_0<\infty.$
It can be shown (see \cite[Proposition 2.3]{Delgado-Ruzhansky:JFA-2014}) that
\begin{equation}\label{EQ:dl1}
d_{j}\leq C(1+\lambda_{j})^{\frac{n}{\nu}}
\end{equation}
for all $j$, and that
\begin{equation}\label{EQ:dl2}
 \sum_{j=1}^{\infty} d_{j} (1+\lambda_{j})^{-q}<\infty
 \quad \textrm{ if and only if } \quad 
 q>\frac{n}{\nu}.
\end{equation}
We denote by $e_{j}^{k}$, $1\leq k\leq d_{j}$, an orthonormal basis in $H_{j}$.
For $f\in L^{2}(X)$, we denote its `Fourier coefficients' by
$$
 \widehat{f}(j,k):=(f,e_{j}^{k})_{L^{2}}.
$$
We will write 
\begin{equation}
\widehat{f}(j) :=\begin{pmatrix}
    \widehat{f}(j,1)\\
    \vdots\\
    \vdots\\
    \widehat{f}(j,d_j)
  \end{pmatrix}
  \in
{\mathbb C}^{d_{j}},
\end{equation}
for the whole Fourier coefficient corresponding to $H_{j}$,
and we can then write
$$
 \|\widehat{f}(j)\|_{\HS}= (\widehat{f}(j) \cdot \widehat{f}(j))^{1/2}=
 \left( \sum_{k=1}^{d_{j}} |\widehat{f}(j,k)|^{2}\right)^{1/2}.
$$
We note the Plancherel formula
\begin{equation}\label{EQ:Plancherel}
\|f\|_{L^{2}(X)}^{2}=\sum_{j=0}^{\infty} \sum_{k=1}^{d_{j}} |\widehat{f}(j,k)|^{2}
=\sum_{j=0}^{\infty}  \|\widehat{f}(j)\|_{\HS}^{2}.
\end{equation}
We refer to \cite{Delgado-Ruzhansky:JFA-2014}, and for further developments in
\cite{Delgado-Ruzhansky:invariant} for a discussion of different properties of the 
associated Fourier analysis. Here we note that the ellipticity of $E$ and the 
Plancherel formula imply the 
following characterisation of smooth functions in terms of their Fourier 
coefficients:
\begin{equation}\label{EQ:smooth}
f\in C^{\infty}(X) \; \Longleftrightarrow \;
\forall N\; \exists C_{N}: 
|\widehat{f}(j,k)|\leq C_{N} \lambda_{j}^{-N} 
\textrm{ for all } j\geq 1, 1\leq k\leq d_{j}.
\end{equation}
If $X$ and $E$ are analytic, the result of Seeley \cite{see:exp} can be reformulated 
(we will give a short argument for it in the proof of Corollary \ref{COR:Gevrey})
as
\begin{equation}\label{EQ:analytic}
f \textrm{ is analytic } \Longleftrightarrow 
 \exists L>0\; \exists C: 
|\widehat{f}(j,k)|\leq C e^{-L\lambda_j^{1/\nu}} 
\textrm{ for all } j\geq 1, 1\leq k\leq d_{j}.
\end{equation}
Similarly, we can put $\|\widehat{f}(j)\|_{\HS}$ on the left-hand sides of these inequalities.
The first aim of this note is to provide a characterisation similar to \eqref{EQ:smooth} and
\eqref{EQ:analytic} for classes of functions in between smooth and analytic functions,
namely, for Gevrey functions, and for their dual spaces of (ultra)distributions.
However, the proof works for more general classes than those of Gevrey functions,
namely, for classes of functions considered by Komatsu \cite{Komatsu:ultra-I}, as well
as their extensions described below.

We now define an analogue of these classes in our setting.
Let $\{M_k\}$ be a sequence of positive numbers such that 

\begin{enumerate}
\item[(M.0)] $M_0 = 1$
\item[(M.1)]  (stability) $$M_{k+1}\leq AH^{k}M_{k},~~~~k=0,1,2,\ldots$$
\end{enumerate}


For our characterisation of functional spaces we will be also assuming condition 
\begin{enumerate}
\item[(M.2)]  
$$M_{2k}\leq AH^{2k} M_k^{2},~~~~~k=0,1,2,\ldots$$
\end{enumerate}
We note that conditions (M.1)$+$(M.2) are a weaker version of the condition
assumed by Komatsu, namely, the condition
$$
M_k\leq AH^{k}\min_{0\leq q\leq k}M_q M_{k-q}, k=0,1,2,\ldots,
$$
which ensures the stability under the application of ultradifferential operators. However, we note that the above condition can be shown to be actually equivalent to (M.2), see 
\cite[Lemma 5.3]{Petzsche: MA-1984}. The assumptions (M.0), (M.1) and (M.2) are weaker than those imposed by 
Komatsu in \cite{Komatsu:ultra-I,Komatsu:ultra-II} who was also assuming
\begin{enumerate}
\item[(M.3$^{\prime}$)]  $\sum_{k=1}^{\infty}\frac{M_{k-1}}{M_k}<\infty.$
\end{enumerate}
Thus, in \cite{Komatsu:ultra-I,Komatsu:ultra-II,Komatsu:ultra-III} 
Komatsu investigated classes of
ultradifferentiable functions on $\Rn$ associated to the sequences $\{M_{k}\}$,
namely, the space of functions $\psi\in C^{\infty}(\Rn)$ such that
for every compact $K\subset\Rn$ there exist $h>0$ and a constant $C$ such that
\begin{equation}\label{EQ:Kom-Rn}
\sup_{x\in K} |\partial^{\alpha}\psi(x)|\leq C h^{|\alpha|} M_{|\alpha|}.
\end{equation}
Sometimes one also assumes logarithmic convexity, i.e. the condition
\begin{enumerate}
\item[] (logarithmic convexity) $\;$ $M_{k}^2\leq M_{k-1}M_{k+1}, ~~~~~~~k=1,2,3,\dots$
\end{enumerate}
This is useful but not restrictive, namely, one can always find an alternative
collection of $M_{k}$'s defining the same class but  satisfying the 
logarithmic convexity condition, see e.g. Rudin \cite[19.6]{Rudin:bk-RandCanalysis-1974}.
We also refer to Rudin \cite{Rudin:bk-RandCanalysis-1974} and to
Komatsu \cite{Komatsu:ultra-I} for examples of different classes satisfying
\eqref{EQ:Kom-Rn}. These include analytic and Gevrey functions, quasi-analytic and
non-quasi-analytic functions (characterised by the Denjoy-Carleman theorem),
and many others.

Given a space of ultradifferentiable functions satisfying \eqref{EQ:Kom-Rn}
we can define a space of ultradistributions as its dual.
Then, among other things, in \cite{Komatsu:ultra-II} Komatsu showed that
under the assumptions (M.0), logarithmic convexity, (M.1) and (M.3$^{\prime}$),
$f$ is an ultradistribution supported in $K\subset\Rn$ if and only if
there exist $L$ and $C$ such that
\begin{equation}\label{EQ:Komatsu-Rn}
|\widetilde{f}(\xi)|\leq C\exp(M(L\xi)),\quad \xi\in\Rn,
\end{equation}
and in addition for each $\epsilon>0$ there is a constant $C_{\epsilon}$ such that
\begin{equation}\label{EQ:Komatsu-Rn-2}
|\widetilde{f}(\zeta)|\leq C_{\epsilon}\exp(H_{K}(\zeta)+\epsilon|\zeta|),\quad \zeta\in\mathbb C^{n},
\end{equation}
where $\widetilde{f}(\zeta)=\langle e^{-i\zeta x}, f(x)\rangle$ 
is the Fourier-Laplace transform of $f$,
$M(r)=\sup_{k\in \mathbb N}\log\frac{r^k}{M_k}$,
 and 
$H_{K}(\zeta)=\sup_{x\in K} {\rm Im} \langle x,\zeta\rangle.$
There are other versions of these estimates given by, for example,
Roumieu \cite{Roumieu:1962} or Neymark \cite{Neymark:1969}. Moreover, by
further strengthening assumptions (M.1), (M.2)
and (M.3$^{\prime}$) one can prove a version
of these conditions without the term $\epsilon|\zeta|$ in \eqref{EQ:Komatsu-Rn-2},
see \cite[Theorem 1.1]{Komatsu:ultra-II} for the precise formulation.

We now give an analogue of Komatsu's definition on a compact $C^{\infty}$ manifold $X$.

\begin{defi} \label{DEF:Komatsu}
The class $\Gamma_{\{M_k\}}(X)$ is the space of $C^{\infty}$ functions $\phi$ on $X$ 
such that there exist $h>0$ and $C>0$ such that we have
\begin{equation}\label{EQ:def-Komatsu-L2}
\|E^{k}\phi\|_{L^{2}(X)}\leq Ch^{\nu k}M_{\nu k},~~~~~k=0,1,2,\ldots
\end{equation}
\end{defi}
We can make several remarks concerning this definition.
\begin{remark}
{\rm (1)} Taking $L^{2}$-norms in \eqref{EQ:def-Komatsu-L2} is
convenient for a number of reasons. But we can already note that 
by embedding theorems and properties of 
the sequence $\{M_{k}\}$ this is equivalent to taking $L^{\infty}$-norms,
or to evaluating the corresponding action of a frame of vector fields 
(instead of the action of powers of a single operator $E$) on functions,
see Theorem \ref{PROP:Komatsu}.

{\rm (2)} This is also equivalent to classes of functions belonging to the corresponding
function spaces in local coordinate charts, 
see Theorem \ref{PROP:Komatsu}, {\rm (v)}.
In order to ensure that we cover the cases
of analytic and Gevrey functions we will be assuming that $X$ and $E$
are analytic.

{\rm (3)} The advantage of Definition \ref{DEF:Komatsu} is that we do not refer to
local coordinates to introduce the class $\Gamma_{\{M_k\}}(X)$. This allows for definition
of analogues of analytic or Gevrey functions even if the manifold $X$ is `only' smooth.
For example, by taking $M_{k}=k!$, we obtain the class $\Gamma_{\{k!\}}(X)$
of functions satisfying the condition
\begin{equation}\label{EQ:an}
\|E^{k}\phi\|_{L^{2}(X)}\leq Ch^{\nu k}(\nu k)!,~~~~~k=0,1,2,\ldots
\end{equation}
If $X$ and $E$ are analytic, we will show in Corollary \ref{COR:Gevrey} that this is
precisely the class of analytic functions\footnote{
An alternative argument could be
to use Theorem \ref{THM:Komatsu} for the characterisation of this class in terms of
the eigenvalues of $E$, and then apply Seeley's result \cite{see:exp} showing that this
is the analytic class.}
on $X$. However, if $X$ is `only' smooth, the space of locally analytic functions does not
make sense while definition given by \eqref{EQ:an} still does. We also note that such 
space $\Gamma_{\{k!\}}(X)$ is still meaningful, for example it contains constants (if $E$ is a 
differential operator), as well as the eigenfunctions of the operator $E$.
\end{remark}

In the sequel we will be also always assuming that 
\begin{align*}
k!\leq C_{l} l^{k}M_{k}, \quad \forall k\in\mathbb{N}_{0} \quad &(\mbox{Roumieu case: for some }  l,C_{l}>0)
\\&
(\mbox{Beurling case: for all } l>0 \mbox{ there is } C_{l}>0).
\end{align*}’
We summarise properties
of the space ${\Gamma_{\{M_k\}}(X)}$ as follows:

\begin{theorem}\label{PROP:Komatsu} 
We have the following properties:
\begin{itemize}
\item[(i)]
The space ${\Gamma_{\{M_k\}}(X)}$ is independent of the choice of an operator
$E\in\Psi^{\nu}_{e}(X)$, i.e. $\phi\in{\Gamma_{\{M_k\}}(X)}$ if and only if 
\eqref{EQ:def-Komatsu-L2} holds for one (and hence for all) elliptic 
pseudo-differential operators $E\in\Psi^{\nu}_{e}(X)$.
\item[(ii)]
We have $\phi\in{\Gamma_{\{M_k\}}(X)}$ if and only if there exist constants 
$h>0$ and $C>0$ such that
\begin{equation}\label{EQ:def-Komatsu-Linf}
\|E^{k}\phi\|_{L^{\infty}(X)}\leq Ch^{\nu k}M_{\nu k}, ~~~~~~~k=0,1,2,\ldots
\end{equation}
\item[(iii)]
Let $\partial_{1},\cdots,\partial_{N}$ be a frame of smooth vector fields
on $X$ (so that $\sum_{j=1}^{N} \partial_{j}^{2}$ is elliptic).
Then $\phi\in \Gamma_{\{M_k\}}(X)$ if and only if 
there exist $h>0$ and $C>0$ such that
\begin{equation}\label{EQ:def-vf}
\|\partial^{\alpha}\phi\|_{L^{\infty}(X)}\leq Ch^{|\alpha|}M_{|\alpha|},
\end{equation}
for all multi-indices $\alpha$, where
$\partial^{\alpha}=\partial_{j_{1}}^{\alpha_{1}}\cdots \partial_{j_{K}}^{\alpha_{K}}$
with $1\leq j_{1},\cdots,j_{K}\leq N$ and 
$|\alpha|=\alpha_{1}+\cdots+\alpha_{K}.$
\item[(iv)]
We have $\phi\in \Gamma_{\{M_k\}}(X)$ if and only if 
there exist $h>0$ and $C>0$ such that
\begin{equation}\label{EQ:def-vf2}
\|\partial^{\alpha}\phi\|_{L^{2}(X)}\leq Ch^{|\alpha|}M_{|\alpha|},
\end{equation}
for all multi-indices $\alpha$ as in {\rm (iii)}.
\item[(v)]
Assume that $X$ and $E$ are analytic.
Then the class $\Gamma_{\{M_k\}}(X)$
is preserved by analytic changes of variables, and hence is well-defined on $X$.
Moreover, in every local coordinate chart, it consists of functions locally belonging
to the class
$\Gamma_{\{M_k\}}(\Rn)$.
\end{itemize}
\end{theorem}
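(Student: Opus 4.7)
The plan is to reduce (i)--(iv) to a single Sobolev-type characterisation and to handle (v) separately via Fa\`a di Bruno and analytic composition.

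\textbf{Sobolev reduction and (i), (iv).} Because $E\in\Psi^{\nu}_{+e}(X)$ is positive and elliptic of order $\nu$, the operator $(I+E)^{k}$ defines the Sobolev norm of order $\nu k$, and elliptic regularity together with the symbolic calculus gives two-sided bounds
$$
C_{1}\|E^{k}\phi\|_{L^{2}}\leq \|\phi\|_{H^{\nu k}(X)}\leq C_{2}\bigl(\|E^{k}\phi\|_{L^{2}}+\|\phi\|_{L^{2}}\bigr),
$$
where a careful unpacking of the composition estimates shows that the constants depend only geometrically (i.e.\ like $C^{k}$) on $k$. Since $H^{\nu k}(X)$ is intrinsic to $X$, any factor $C^{k}$ can be absorbed into $h$ using (M.1), proving (i). Part (iv) then follows because $L:=\sum_{j=1}^{N}\partial_{j}^{2}$ is itself elliptic of order $2$: the norm $\|\phi\|_{H^{2k}}$ is controlled above and below by $\sum_{|\alpha|\leq 2k}\|\partial^{\alpha}\phi\|_{L^{2}}$ with constants growing only geometrically in $k$, and (M.1) compares $M_{\nu k}$ to $M_{|\alpha|}$ up to geometric factors.

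\textbf{From $L^{2}$ to $L^{\infty}$: (ii) and (iii).} Fix an integer $r$ with $\nu r>n/2$. Sobolev embedding gives $\|\psi\|_{L^{\infty}}\leq C\|\psi\|_{H^{\nu r}}$, so applying this with $\psi=E^{k}\phi$ and using the previous step yields
$$
\|E^{k}\phi\|_{L^{\infty}}\leq C'\bigl(\|E^{k+r}\phi\|_{L^{2}}+\|\phi\|_{L^{2}}\bigr)\leq C'' h^{\nu(k+r)}M_{\nu(k+r)}.
$$
Using (M.1) $\nu r$ times one absorbs $M_{\nu(k+r)}$ into $M_{\nu k}$ modulo a geometric factor, and redefining $h$ yields (ii). The same embedding applied to $\partial^{\alpha}\phi$ in place of $E^{k}\phi$ turns (iv) into (iii), while the trivial inequality $\|\cdot\|_{L^{2}}\leq \mathrm{vol}(X)^{1/2}\|\cdot\|_{L^{\infty}}$ gives the reverse implications.

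\textbf{Analytic invariance (v).} Assume now that $X$ and $E$ are analytic. For an analytic change of variables $\Phi$ between coordinate patches, its derivatives satisfy $|\partial^{\beta}\Phi|\leq C_{0}R^{|\beta|}\beta!$. The Fa\`a di Bruno formula expresses each $\partial^{\alpha}(\phi\circ\Phi)$ as a sum over partitions of $\alpha$ of products of derivatives of $\phi$ evaluated at $\Phi$ and derivatives of $\Phi$. Bounding these summands using \eqref{EQ:def-vf} for $\phi$, the analytic estimate for $\Phi$, and the standing hypothesis $k!\leq C_{l}l^{k}M_{k}$, the problem reduces to controlling sums of the form $\sum_{q}\binom{|\alpha|}{q}M_{q}M_{|\alpha|-q}$, which, by the strengthened form of (M.2) recalled just after its statement, is dominated by $AH^{|\alpha|}M_{|\alpha|}$. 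Iterating, $\phi\circ\Phi$ belongs to the same class, so the condition \eqref{EQ:def-vf} is coordinate-invariant and coincides locally with the Euclidean definition \eqref{EQ:Kom-Rn}.

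\textbf{Principal obstacle.} The main difficulty throughout is bookkeeping of constants: in Steps 1--2 one must verify that the symbolic calculus and Sobolev embedding produce only geometric-in-$k$ losses, so that (M.1) can absorb them; in Step 4 the combinatorial explosion of Fa\`a di Bruno is tamed precisely by the composition form of (M.2), which is exactly why this condition (rather than (M.2) in its original multiplicative form) is the natural hypothesis for coordinate invariance.
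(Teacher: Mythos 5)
Your treatment of parts (i)--(iv) is essentially the paper's own argument: both reduce everything to $L^{2}$ estimates for powers of elliptic operators and rely on the claim that the relevant pseudo-differential compositions ($E_{1}^{k}\circ E_{2}^{-k}$, respectively $\partial^{\alpha}\circ E^{-k}$) have operator norms growing at most geometrically in $k$, after which (M.1) absorbs the loss. The one methodological difference is in the passage from $L^{2}$ to $L^{\infty}$: you invoke the Sobolev embedding $\|\psi\|_{L^{\infty}}\leq C\|\psi\|_{H^{\nu r}}$, whereas the paper derives the inequality \eqref{EQ:aux1} by summing the eigenfunction expansion against the local Weyl law bound \eqref{EQ:Weyl}, precisely in order to keep explicit, uniform control of the constant; the paper itself remarks that the Sobolev embedding route is available, so this is a legitimate variant. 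For (iii)--(iv) the paper replaces your asserted norm equivalence by two explicit one-sided arguments (the noncommutative multinomial expansion of $(\sum_{j}\partial_{j}^{2})^{k}$ in one direction, boundedness of $P_{\alpha}=\partial^{\alpha}\circ E^{-k}$ in the other); your version carries the same content but you should be aware that the equivalence of $\|\cdot\|_{H^{2k}}$ with frame derivatives, with geometric constants, is exactly what these two computations establish and is not automatic.

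There is, however, a genuine error in your part (v). You propose to bound $\sum_{q}\binom{|\alpha|}{q}M_{q}M_{|\alpha|-q}$ by $AH^{|\alpha|}M_{|\alpha|}$ using the Komatsu-type condition recalled after (M.2), namely $M_{k}\leq AH^{k}\min_{0\leq q\leq k}M_{q}M_{k-q}$. That condition gives a \emph{lower} bound $M_{q}M_{k-q}\geq A^{-1}H^{-k}M_{k}$, which is the wrong direction for your purpose. The upper bound $M_{q}M_{k-q}\leq M_{k}$ that the Fa\`a di Bruno estimate actually requires comes from logarithmic convexity together with $M_{0}=1$ (since then $M_{j}/M_{j-1}$ is nondecreasing, so $M_{k}/M_{k-q}\geq M_{q}/M_{0}$). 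Logarithmic convexity is not among the standing hypotheses (M.0)--(M.2), but the paper notes it may be assumed without loss of generality by passing to an equivalent defining sequence; your argument for (v) needs to invoke that normalisation explicitly, together with $k!\leq C_{l}l^{k}M_{k}$ to handle the factorials coming from the analytic change of variables. With that correction your sketch of (v) is consistent with the paper's (very terse) indication that (v) follows from \eqref{EQ:def-vf}, the bound $M_{k}\geq Ck!$, and the chain rule.
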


The important example of the situation in Theorem \ref{PROP:Komatsu} , Part (v), is 
that of Gevrey classes $\gamma^{s}(X)$ 
of ultradifferentiable functions, when we have
$\gamma^{s}(X)=\Gamma_{\{M_k\}}(X)$ with the constants $M_{k}=(k!)^{s}$ for $s\geq 1.$
By Theorem \ref{PROP:Komatsu}, Part (v), this is the space of Gevrey functions on
$X$, i.e. functions which belong to the Gevrey classes $\gamma^{s}(\Rn)$ in all
local coordinate charts, i.e. such that there exist $h>0$ and $C>0,$ such that
\begin{equation*}\label{EQ:def-vfRn}
\|\partial^{\alpha}\psi\|_{L^{\infty}(\Rn)}\leq Ch^{|\alpha|}(|\alpha|!)^{s},
\end{equation*}
for all localisations $\psi$ of the function $\phi$ on $X$, and for all multi-indices $\alpha$.

If $s=1$, this is the space of analytic functions.

\medskip
For the sequence $\{M_k\}$, we define the \textit{associated function} as
$$
M(r):=\sup_{k\in \mathbb N}\log\frac{r^{\nu k}}{M_{\nu k}},\quad r>0,
$$ 
and we may set $M(0):=0$.
We briefly establish a simple property of eigenvalues $\lambda_{l}$ useful
in the sequel, namely that for every $q$, $L>0$ and $\delta>0$ there exists $C>0$
such that we have
\begin{equation}\label{EQ:est-ev}
\lambda_l^{q}e^{-\delta M\left(L\lambda_l^{\onu}\right)}\leq
C\quad \textrm{uniformly in } l\geq 1. 
\end{equation}
Indeed, from the definition of the function $M$ it follows that 
$$
\lambda_l^{q}e^{-\delta M\left(L\lambda_l^{\onu}\right)}\leq 
\lambda_l^{q}\frac{M^{\delta}_{\nu p}}{L^{\nu p\delta}\lambda_l^{p\delta}},
$$
for every $p\in \mathbb N$.
In particular, using this with $p$ such that $p\delta =q+1$, we obtain
$$
\lambda_l^{q}e^{-\delta M\left(L\lambda_l^{\onu}\right)}\leq
\frac{M^{\delta}_{\nu p}}{L^{\nu(q+1)}\lambda_l}\leq C
$$
uniformly in $l\geq 1$, implying \eqref{EQ:est-ev}.

Now we characterise the class ${\Gamma_{\{M_k\}}(X)}$ of ultradifferentiable functions
in terms of eigenvalues of operator $E$. Unless stated explicitly, we usually
assume that $X$ and $E$ are only smooth (i.e. not necessarily analytic).

\begin{theorem}\label{THM:Komatsu}
Assume conditions {\rm (M.0), (M.1), (M.2).} 
Then $\phi\in{\Gamma_{\{M_k\}}(X)}$ if and only if there exist constants 
$C>0,$ $L>0$ such that 
$$
\|\widehat{\phi}(l)\|_{\HS}\leq C\exp\{-M(L \lambda_l^{\onu} )\}
\quad \textrm{ for all } \; l\geq 1,
$$ 
where $M(r)=\sup_{k}\log\frac{r^{\nu k}}{M_{\nu k}}.$ 
\end{theorem}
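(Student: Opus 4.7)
The plan is to reduce everything to the Plancherel identity applied to $E^{k}\phi$, together with the elementary observation that by the very definition of the associated function $M$ one has $\inf_{k\geq 0}M_{\nu k}/r^{\nu k}=\exp\{-M(r)\}$. Since each eigenspace $H_{j}$ is invariant under $E$ with eigenvalue $\lambda_{j}$, the Plancherel formula \eqref{EQ:Plancherel} yields
\[
\|E^{k}\phi\|_{L^{2}(X)}^{2}=\sum_{j=0}^{\infty}\lambda_{j}^{2k}\,\|\widehat{\phi}(j)\|_{\HS}^{2},
\]
so each direction becomes a matter of passing between this weighted series in $j$ and the sequence of bounds indexed by $k$.

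For the forward implication, fix $l\geq 1$ and isolate the single term $j=l$ in the identity above: assumption \eqref{EQ:def-Komatsu-L2} gives $\lambda_{l}^{2k}\|\widehat{\phi}(l)\|_{\HS}^{2}\leq C^{2}h^{2\nu k}M_{\nu k}^{2}$ for every $k\geq 0$. Rearranging and taking the infimum over $k$ exhibits $\|\widehat{\phi}(l)\|_{\HS}$ as bounded by $C\inf_{k\geq 0}M_{\nu k}/(\lambda_{l}^{\onu}/h)^{\nu k}=C\exp\{-M(\lambda_{l}^{\onu}/h)\}$, which is the desired estimate with $L=1/h$. No structural hypothesis on $\{M_{k}\}$ is actually used at this step.

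The converse is where the work lies, and where (M.2) is essential. Given the decay estimate, I split $\exp\{-2M(L\lambda_{j}^{\onu})\}$ as a product of two copies of $\exp\{-M(L\lambda_{j}^{\onu})\}$ and apply to each the inequality $\exp\{-M(L\lambda_{j}^{\onu})\}\leq M_{\nu m}/(L^{\nu m}\lambda_{j}^{m})$ (valid for every $m\in\N0$ by definition of $M$) with a different choice of $m$: $m=2k$ on the first copy cancels the weight $\lambda_{j}^{2k}$ and produces the constant $M_{2\nu k}/L^{2\nu k}$, while a fixed $m_{0}>n/\nu$ on the second copy leaves a convergent tail $\sum_{j\geq 1}\lambda_{j}^{-m_{0}}$ by \eqref{EQ:dl2} (since $d_{j}\geq 1$). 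Combining the two factors gives
\[
\|E^{k}\phi\|_{L^{2}}^{2}\leq C'\,\frac{M_{2\nu k}}{L^{2\nu k}}.
\]
The main obstacle, and the reason (M.2) appears in the hypotheses, is the final passage from $M_{2\nu k}$ to $M_{\nu k}^{2}$: applying (M.2) with index $\nu k$ gives $M_{2\nu k}\leq AH^{2\nu k}M_{\nu k}^{2}$, whence $\|E^{k}\phi\|_{L^{2}}\leq C''(H/L)^{\nu k}M_{\nu k}$, i.e. $\phi\in\Gamma_{\{M_k\}}(X)$ with $h=H/L$. The $j=0$ term is harmless: for $k\geq 1$ it vanishes because $\lambda_{0}=0$, and for $k=0$ it is part of the (finite) quantity $\|\phi\|_{L^{2}}^{2}$.
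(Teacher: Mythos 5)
Your proof is correct and follows essentially the same route as the paper's: Plancherel applied to $E^{k}\phi$, the identity $\inf_{k}r^{-\nu k}M_{\nu k}=e^{-M(r)}$ for the forward direction, a two-factor split of $\exp\{-2M(L\lambda_j^{1/\nu})\}$ in the converse, and (M.2) at the end. The only (cosmetic) difference is that you take the exponent $2k$ exactly on the first factor and load all the convergence onto a fixed $m_{0}>n/\nu$ in the second, which spares you the paper's extra application of (M.1) needed to pass from $M_{\nu(2m+1)}$ down to $M_{2\nu m}$.
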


For the Gevrey class 
$$\gamma^{s}(X)=\Gamma_{\{(k!)^{s}\}}(X),\quad 1\leq s<\infty,$$ 
of 
(Gevrey-Roumieu) ultradifferentiable functions
we have $M(r)\simeq r^{1/s}$.
Indeed, using the inequality $\frac{t^N}{N!}\leq e^{t}$ we get
\begin{eqnarray*}
M(r)=\sup_{k}\log\frac{r^{\nu k}}{((\nu k)!)^s}\leq 
\sup_{\ell}\log\left[\frac{(r^{1/s})^\ell}{\ell!}\right]^{s}
\leq \sup_{\ell}\log\left[e^{sr^{1/s}}\right]
= sr^{1/s}.
\end{eqnarray*}
On the other hand, first we note the inequality
$$
\inf_{p\in\mathbb N} (2p)^{2ps} r^{-2p} \leq \exp\left(-\frac{s}{8e} r^{1/s}\right),
$$
see \cite[Formula (3.20)]{Dasgupta-Ruzhansky:BSM} for the proof.
Using the inequality $(k+\nu)^{k+\nu}\leq (4\nu)^{k} k^{k}$ for $k\geq \nu$, 
an analogous proof yields the inequality 
$$
\inf_{p\in \mathbb N} (\nu p)^{\nu ps} r^{-\nu p} \leq 
\exp\left(-\frac{s}{4\nu e} r^{1/s}\right).
$$
This and the inequality $p!\leq p^{p}$ imply
\begin{multline*}
\exp(M(r))=\sup_{k}\frac{r^{\nu k}}{((\nu k)!)^s}
= \frac{1}{\inf_{k}\{(r^{1/s})^{-\nu k}((\nu k)!)\}^{s}} \\
\geq 
\frac{1}{\inf_{k}\{(r^{1/s})^{-\nu k}(\nu k)^{\nu k}\}^{s}} 
\geq \frac{1}{\exp\{-(\frac{s}{4\nu e})r^{1/s}\}}
=\exp{{\left(\frac{s}{4\nu e}r^{1/s}\right)}}.
\end{multline*}
Combining both inequalities we get
\begin{equation}\label{EQ:M-Gevrey}
\frac{s}{4\nu e}r^{1/s} \leq M(r)\leq sr^{1/s}.
\end{equation}
Consequently, we obtain the characterisation of Gevrey spaces:

\begin{corollary}\label{COR:Gevrey}
Let $X$ and $E$ be analytic and let $s\geq 1$.
We have $\phi\in \gamma^{s}(X)$ if and only if there exist constants 
$C>0,$ $L>0$ such that 
$$
\|\widehat{\phi}(l)\|_{\HS}\leq C \exp{(-L \lambda_l^{\frac{1}{s\nu}})}
\quad \textrm{ for all } \; l\geq 0.
$$ 
In particular, for $s=1$, we recover the characterisation of analytic functions in
\eqref{EQ:analytic}.
\end{corollary}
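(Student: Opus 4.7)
The plan is to derive the corollary directly from Theorem \ref{THM:Komatsu} by specialising the Komatsu sequence to $M_k=(k!)^s$, and then to translate the associated function $M(r)$ into its explicit asymptotic form. By Theorem \ref{PROP:Komatsu}(v), whose hypotheses include the analyticity of $X$ and $E$, the intrinsically defined class $\Gamma_{\{(k!)^s\}}(X)$ coincides with the usual Gevrey space $\gamma^{s}(X)$ given by the local coordinate definition. This is the bridge that makes Theorem \ref{THM:Komatsu} applicable to $\gamma^{s}(X)$.

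First I would verify that $M_k=(k!)^s$ satisfies hypotheses (M.0), (M.1), (M.2) for $s\geq 1$: (M.0) is trivial, (M.1) follows from $(k+1)!\leq 2^k k!$, and (M.2) is the standard estimate $(2k)!\leq 4^k(k!)^2$ raised to the power $s$. The Roumieu side condition $k!\leq C_l l^k M_k$ is automatic since $s\geq 1$ gives $M_k\geq k!$. Theorem \ref{THM:Komatsu} then asserts that $\phi\in\gamma^{s}(X)$ if and only if
$$\|\widehat{\phi}(l)\|_{\HS}\leq C\exp\{-M(L\lambda_l^{1/\nu})\}\quad\text{for all }l\geq 1,$$
for some $C,L>0$, where $M(r)=\sup_k\log\frac{r^{\nu k}}{((\nu k)!)^s}$. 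To convert this into the clean form stated in the corollary, I would invoke the two-sided inequality $\frac{s}{4\nu e}r^{1/s}\leq M(r)\leq sr^{1/s}$ in \eqref{EQ:M-Gevrey}, already established in the paragraph preceding the corollary. Substituting $r=L\lambda_l^{1/\nu}$ sandwiches $M(L\lambda_l^{1/\nu})$ between two multiples of $\lambda_l^{1/(s\nu)}$, and absorbing the numerical factors into a renamed constant $L'$ shows the equivalence of $C\exp\{-M(L\lambda_l^{1/\nu})\}$ and $C\exp(-L'\lambda_l^{1/(s\nu)})$.

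The extension from $l\geq 1$ to $l\geq 0$ is cosmetic: the $l=0$ term has $\lambda_0=0$ so the exponential factor equals $1$, and since $\dim H_0<\infty$ and $\phi\in L^2(X)$ we have $\|\widehat{\phi}(0)\|_{\HS}\leq\|\phi\|_{L^{2}(X)}$, which is absorbed into $C$. For $s=1$ we have $M_k=k!$, and Theorem \ref{PROP:Komatsu}(v) identifies $\Gamma_{\{k!\}}(X)$ with the class of analytic functions on the analytic manifold $X$; the characterisation then specialises to $\|\widehat{\phi}(l)\|_{\HS}\leq Ce^{-L\lambda_l^{1/\nu}}$, which is exactly Seeley's criterion \eqref{EQ:analytic}. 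The only quantitative step, the comparison $M(r)\simeq r^{1/s}$, has already been carried out in the surrounding text; beyond that, no real obstacle remains, as the argument reduces to substituting the specific sequence into Theorem \ref{THM:Komatsu} and tracking constants.
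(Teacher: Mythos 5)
Your proposal is correct and follows essentially the same route as the paper: apply Theorem \ref{THM:Komatsu} with $M_k=(k!)^s$, use the two-sided bound \eqref{EQ:M-Gevrey} to replace $M(L\lambda_l^{1/\nu})$ by a multiple of $\lambda_l^{1/(s\nu)}$, and invoke Theorem \ref{PROP:Komatsu}(v) to identify $\Gamma_{\{(k!)^s\}}(X)$ with $\gamma^s(X)$. You additionally spell out the verification of (M.0)--(M.2) and the harmless $l=0$ term, which the paper leaves implicit, while the paper spends its effort instead on reconciling the $s=1$ case with Seeley's original $L^2$ formulation; these are only presentational differences.
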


In Corollary \ref{COR:Gevrey} we assume that $X$ and $E$ are analytic in order to
interpret the space $\gamma^{s}(X)$ locally as a Gevrey space, see
Theorem \ref{PROP:Komatsu}, (v). 

We now turn to the eigenfunction expansions of the corresponding spaces of
ultradistributions.

\begin{defi} 
The space ${\Gamma^{\prime}_{\{M_k\}}}(X)$ is the set of all linear forms $u$ on 
$\Gamma_{\{M_k\}}(X)$ such that for every $\epsilon>0$ there exists 
$C_{\epsilon}>0$ such that
$$
|u(\phi)|\leq C_{\epsilon} \sup_{\alpha} \epsilon^{|\alpha|}
M^{-1}_{\nu |\alpha|}\sup_{x\in X}|E^{|\alpha|}\phi(x)|
$$ 
holds for all $\phi\in \Gamma_{\{M_k\}}(X)$.
\end{defi}
We can define the Fourier coefficients of such $u$ by
$$
\widehat{u}(e^{k}_{l}):=u(\overline{e^k_l}) \;\textrm{ and }\;
\widehat{u}(l):=\widehat{u}(e_l):=\left[u(\overline{e^k_l})\right]_{k=1}^{d_l}.
$$

\begin{theorem}\label{THM:ultra}
Assume conditions {\rm (M.0), (M.1), (M.2).} 
We have $u\in \Gamma^{\prime}_{\{M_{k}\}}(X)$
if and only if for every $L>0$ there exists $K=K_{L}>0$ such that 
$$
\|\widehat{u}(l)\|_{\HS}\leq K\exp\left(M(L\lambda_l^{1/\nu})\right)
$$
holds for all $l\in\mathbb N$.
\end{theorem}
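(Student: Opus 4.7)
My plan is to identify both $\Gamma_{\{M_k\}}(X)$ and $\Gamma^{\prime}_{\{M_k\}}(X)$ with sequence spaces via the Fourier expansion in the eigenfunctions $e^{k}_{l}$, and then read the statement as a duality between two K\"othe-type sequence spaces, which is precisely the $\alpha$-dual route indicated in the introduction. The essential ingredients will be Theorem \ref{THM:Komatsu} (and the Fourier-side seminorms it produces for $\Gamma_{\{M_k\}}(X)$), the decay estimate \eqref{EQ:est-ev}, the dimension bounds \eqref{EQ:dl1}--\eqref{EQ:dl2}, and the submultiplicative property $2M(r/H)\leq M(r)+\log A$, which follows directly from (M.2) by estimating $r^{2\nu j}/M_{2\nu j}$ with the help of $M_{2\nu j}\leq AH^{2\nu j}M_{\nu j}^{2}$.

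For the necessity direction, I would apply the defining estimate of $\Gamma^{\prime}_{\{M_k\}}$ to the test functions $\phi=\overline{e^{k}_{l}}$. Since $E$ is self-adjoint with $E\overline{e^{k}_{l}}=\lambda_{l}\overline{e^{k}_{l}}$, and since elliptic Sobolev embedding gives $\|e^{k}_{l}\|_{L^{\infty}}\leq C(1+\lambda_{l})^{s/\nu}$ for any fixed $s>n/2$, the definition yields
$$
|\widehat{u}(l,k)| \leq C_{\epsilon}\,C\,(1+\lambda_{l})^{s/\nu}\sup_{\alpha}\frac{(\epsilon \lambda_{l})^{|\alpha|}}{M_{\nu|\alpha|}} = C_{\epsilon}\,C\,(1+\lambda_{l})^{s/\nu}\exp\bigl(M(\epsilon^{1/\nu}\lambda_{l}^{1/\nu})\bigr).
$$
Given any $L>0$, I would choose $\epsilon^{1/\nu}=L/H$, so that submultiplicativity gives $M(\epsilon^{1/\nu}\lambda_{l}^{1/\nu})\leq \tfrac{1}{2}M(L\lambda_{l}^{1/\nu})+\tfrac{1}{2}\log A$, and then absorb both the polynomial factor $(1+\lambda_{l})^{s/\nu}$ and the dimension factor $\sqrt{d_{l}}\leq C(1+\lambda_{l})^{n/(2\nu)}$ (needed to pass from $|\widehat{u}(l,k)|$ to $\|\widehat{u}(l)\|_{\HS}$) into $\exp(\delta M(L\lambda_{l}^{1/\nu}))$ for a small $\delta$ via \eqref{EQ:est-ev}. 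The resulting exponent remains bounded by $M(L\lambda_{l}^{1/\nu})$ up to an additive constant, giving the required bound.

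For the sufficiency direction, given Fourier data with $\|\widehat{u}(l)\|_{\HS}\leq K_{L}\exp(M(L\lambda_{l}^{1/\nu}))$ for every $L>0$, I would define $u$ on $\Gamma_{\{M_k\}}(X)$ by the pairing $u(\phi)=\sum_{l,k}\overline{\widehat{u}(l,k)}\,\widehat{\phi}(l,k)$ suggested by $\widehat{u}(l,k)=u(\overline{e^{k}_{l}})$. To verify the defining estimate with a given $\epsilon>0$, I would re-express the seminorm $\sup_{\alpha}\epsilon^{|\alpha|}M_{\nu|\alpha|}^{-1}\|E^{|\alpha|}\phi\|_{L^{\infty}}$ on the Fourier side: arguments of the same flavour as in the proof of Theorem \ref{THM:Komatsu} show that this seminorm controls $\|\widehat{\phi}(l)\|_{\HS}\exp(M(L_{\epsilon}\lambda_{l}^{1/\nu}))$ uniformly in $l$ for a suitable $L_{\epsilon}>0$. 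Cauchy--Schwarz then bounds
$$
|u(\phi)|\leq\sum_{l}\|\widehat{u}(l)\|_{\HS}\|\widehat{\phi}(l)\|_{\HS} \leq K_{L}\,\|\phi\|_{\epsilon}\sum_{l} d_{l}^{1/2}\exp\bigl(M(L\lambda_{l}^{1/\nu})-M(L_{\epsilon}\lambda_{l}^{1/\nu})\bigr),
$$
and for $L<L_{\epsilon}/H$ the submultiplicative inequality forces a net exponential decay sufficient, together with \eqref{EQ:dl1}--\eqref{EQ:dl2} and \eqref{EQ:est-ev}, to make this series summable. This produces the required $|u(\phi)|\leq C^{\prime}_{\epsilon}\|\phi\|_{\epsilon}$.

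The main obstacle is the quantifier bookkeeping: matching the pattern ``for every $\epsilon$ exists $C_{\epsilon}$'' on the distribution side with ``for every $L$ exists $K_{L}$'' on the Fourier side, across several transitions (Sobolev embedding, $L^{\infty}$ versus $L^{2}$, and inserting the dimension $d_{l}$), each of which costs a polynomial factor in $\lambda_{l}$. Absorbing all these losses without degrading the quantifier relationship is made possible precisely by combining \eqref{EQ:est-ev} with the submultiplicativity $2M(r/H)\leq M(r)+\log A$, which is the only genuine structural input beyond Theorem \ref{THM:Komatsu} itself.
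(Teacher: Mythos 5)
Your proposal is correct and follows essentially the same route as the paper, which factors the argument through Theorems \ref{THM:alpha-duals} and \ref{THM:distribution}: testing against the eigenfunctions $\overline{e^{k}_{l}}$ together with a polynomial $L^{\infty}$ bound on $e^{k}_{l}$, the identity $\sup_{\alpha}(\epsilon\lambda_{l})^{|\alpha|}/M_{\nu|\alpha|}=\exp(M(\epsilon^{1/\nu}\lambda_{l}^{1/\nu}))$ and the (M.2)-based inequality \eqref{EQ:exps} for necessity, and Cauchy--Schwarz combined with Theorem \ref{THM:Komatsu}, \eqref{EQ:est-ev} and \eqref{EQ:dl1}--\eqref{EQ:dl2} for sufficiency. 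The only difference is organizational: you inline the $\alpha$-dual characterisation and verify the defining seminorm estimate directly, where the paper passes through $[\Gamma_{\{M_k\}}(X)]^{\wedge}$ and checks sequential continuity.
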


The spaces of ultradifferentiable functions in Definition \ref{DEF:Komatsu} can be
viewed as the spaces of Roumieu type. With natural modifications
the results remain true for spaces of 
Beurling type. We summarise them below. We will not give complete proofs but
can refer to \cite{Dasgupta-Ruzhansky:BSM} for details of such modifications
in the context of compact Lie groups.

The class $\Gamma_{(M_k)}(X)$ is the space of $C^{\infty}$ functions $\phi$ on $X$ 
such that for every $h>0$ there exists $C_{h}>0$ such that we have
\begin{equation}\label{EQ:def-Komatsu-L2b}
\|E^{k}\phi\|_{L^{2}(X)}\leq C_{h} h^{\nu k}M_{\nu k},~~~~~k=0,1,2,\ldots
\end{equation}
The counterpart of Theorem \ref{PROP:Komatsu}  holds for this class as well,
and we have

\begin{theorem}\label{THM:Komatsu-b}
Assume conditions {\rm (M.0), (M.1), (M.2).} 
We have $\phi\in{\Gamma_{(M_k)}(X)}$ if and only if for every $L>0$ 
there exists $C_{L}>0$ such that 
$$
\|\widehat{\phi}(l)\|_{\HS}\leq C_{L}\exp\{-M(L \lambda_l^{\onu} )\}
\quad \textrm{ for all } \; l\geq 1.
$$ 
For the dual space and for the $\alpha$-dual, 
the following statements are equivalent
\begin{itemize}
\item[(i)]  $v\in \Gamma^{\prime}_{(M_{k})}(X);$
\item[(ii)]  $v\in [\Gamma_{(M_{k})}(X)]^{\wedge};$
\item[(iii)] there exists $L>0$ such that we have 
$$
\sum_{l=1}^{\infty}
\exp \left(-M(L \lambda_l^{\onu})\right)\|v_l\|_{\HS}<\infty;
$$
\item[(iv)] there exist $L>0$ and $K>0$ such that 
$$
\|v_l\|_{\HS}\leq K\exp\left(M(L\lambda_l^{\onu})\right)
$$
holds for all $l\in\mathbb N$.
\end{itemize}
\end{theorem}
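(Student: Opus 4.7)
The plan is to treat the Beurling version as a systematic quantifier-reversal of the Roumieu statements proved in Theorem \ref{THM:Komatsu} and Theorem \ref{THM:ultra}, and then tie together the four characterisations of the dual by a short cycle of implications.

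For the characterisation of $\Gamma_{(M_k)}(X)$ itself, I would mirror the proof of Theorem \ref{THM:Komatsu} with each ``$\exists h$'' replaced by ``$\forall h$'' in the hypothesis, and correspondingly ``$\forall L$'' in the conclusion. Given $\phi\in\Gamma_{(M_k)}(X)$, Plancherel turns \eqref{EQ:def-Komatsu-L2b} into
$$
\lambda_{l}^{2k}\|\widehat{\phi}(l)\|_{\HS}^{2}\leq \sum_{j}\lambda_{j}^{2k}\|\widehat{\phi}(j)\|_{\HS}^{2}=\|E^{k}\phi\|_{L^{2}}^{2}\leq C_{h}^{2}h^{2\nu k}M_{\nu k}^{2},
$$
so $\lambda_{l}^{k}\|\widehat{\phi}(l)\|_{\HS}\leq C_{h}h^{\nu k}M_{\nu k}$; taking the supremum over $k$ in the definition of the associated function $M$ gives $\|\widehat{\phi}(l)\|_{\HS}\leq C_{h}\exp(-M(h^{-1}\lambda_{l}^{\onu}))$, and since $h$ is arbitrary so is $L=1/h$. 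Conversely, starting from the Fourier bound for every $L$, I use $\lambda_{l}^{k}\leq L^{-\nu k}M_{\nu k}\exp(M(L\lambda_{l}^{\onu}))$, which is immediate from the definition of $M$, split off the extra decay by passing from $L$ to a larger $L'$ and writing $\exp(-M(L'\lambda_{l}^{\onu}))=\exp(-M(L\lambda_{l}^{\onu}))\exp(-[M(L'\lambda_{l}^{\onu})-M(L\lambda_{l}^{\onu})])$, and then sum $\sum_{l}\lambda_{l}^{2k}\|\widehat{\phi}(l)\|_{\HS}^{2}$ using \eqref{EQ:dl1}, \eqref{EQ:dl2} together with \eqref{EQ:est-ev} to absorb the dimension factor $d_{l}$.

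For the dual characterisations, I would close the loop (iii)$\Rightarrow$(iv)$\Rightarrow$(iii) and (ii)$\Leftrightarrow$(iii) and (i)$\Leftrightarrow$(ii). The implication (iii)$\Rightarrow$(iv) is trivial, since each term in a convergent nonnegative series is bounded by the total. For (iv)$\Rightarrow$(iii), I bound
$$
\sum_{l=1}^{\infty}\exp(-M(L'\lambda_{l}^{\onu}))\|v_{l}\|_{\HS}\leq K\sum_{l=1}^{\infty}\exp\bigl(M(L\lambda_{l}^{\onu})-M(L'\lambda_{l}^{\onu})\bigr),
$$
and choose $L'$ large enough that the exponential gap dominates the polynomial growth of $d_{l}$ in \eqref{EQ:dl1}; this is precisely the content of \eqref{EQ:est-ev} applied with $\delta>0$ small. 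For (ii)$\Leftrightarrow$(iii), the $\alpha$-dual consists of sequences $(v_{l})$ for which $\sum_{l}|v_{l}\cdot\widehat{\phi}(l)|<\infty$ for every $\phi\in\Gamma_{(M_k)}(X)$; by the first part of the theorem, such $\phi$ are exactly those with Fourier coefficients decaying like $e^{-M(L\lambda_{l}^{\onu})}$ for every $L$, so Cauchy--Schwarz in each $H_{l}$ plus an optimisation in $L$ gives the equivalence with (iii). Finally, for (i)$\Leftrightarrow$(ii), I observe that eigenfunctions lie in $\Gamma_{(M_k)}(X)$ (this needs the assumption $k!\leq C_{l}l^{k}M_{k}$ in the Beurling sense so that finite linear combinations are admissible test functions and sit in $\Gamma_{(M_k)}(X)$), so any $v\in\Gamma'_{(M_k)}(X)$ yields a sequence $v_{l}=\widehat{v}(l)$, and conversely the series $\phi\mapsto\sum_{l}v_{l}\cdot\widehat{\phi}(l)$ defined from an $\alpha$-dual element is a continuous functional on $\Gamma_{(M_k)}(X)$ by the Cauchy--Schwarz estimate above.

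The main obstacle is the convergence step in (iv)$\Rightarrow$(iii): in the Beurling setting the parameter $L$ appearing in the hypothesis is \emph{fixed}, and one must gain extra summability by enlarging it to $L'$ while still controlling the multiplicities $d_{l}$. The estimate \eqref{EQ:est-ev}, combined with the summability criterion \eqref{EQ:dl2}, is exactly tailored to this, so once that balance is carefully set up the rest of the cycle is essentially bookkeeping parallel to the Roumieu case treated in Sections \ref{SEC:alpha} and \ref{SEC:ultradistributions}.
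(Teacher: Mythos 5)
Most of your outline is sound and matches what the paper intends: the first part of the theorem and the implications (iii)$\Rightarrow$(iv), (iv)$\Rightarrow$(iii), (iii)$\Rightarrow$(ii) and (ii)$\Leftrightarrow$(i) really are quantifier-reversed reruns of Theorems \ref{THM:Komatsu}, \ref{THM:alpha-duals} and \ref{THM:distribution}, with the enlargement $L\mapsto\sqrt{A}HL$ from \eqref{EQ:exps} and the summability from \eqref{EQ:dl2} doing the work. You have, however, misplaced the difficulty. You single out (iv)$\Rightarrow$(iii) as the main obstacle, but that step is routine (it is literally the Roumieu argument for (iii)$\Rightarrow$(ii) in Theorem \ref{THM:alpha-duals}: pass from $L$ to $L_{3}=\sqrt{A}HL$ via (M.2) and sum). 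The genuinely non-parallel step is (ii)$\Rightarrow$(iii): if $v$ lies in the $\alpha$-dual of $\Gamma_{(M_k)}(X)$, why must there exist a \emph{single} $L$ with $\sum_{l}e^{-M(L\lambda_l^{\onu})}\|v_l\|_{\HS}<\infty$? Your proposed ``Cauchy--Schwarz in each $H_l$ plus an optimisation in $L$'' only yields the easy converse; it cannot produce the existential $L$. In the Roumieu case the paper proves the analogous direction by exhibiting, for each $L$, the test function with coefficients $e^{-M(L\lambda_l^{\onu})}$ and checking via (M.2) that it lies in $\Gamma_{\{M_k\}}(X)$. That construction fails here: for a fixed $L$ this function does \emph{not} belong to the Beurling class, since membership requires decay of order $e^{-M(L'\lambda_l^{\onu})}$ for \emph{every} $L'$.

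To close this gap you need the contrapositive together with a diagonal (gliding-hump) construction: assuming $\sum_{l}e^{-M(m\lambda_l^{\onu})}\|v_l\|_{\HS}=\infty$ for every $m\in\mathbb N$, choose disjoint finite blocks $F_m$ of indices with $\sum_{l\in F_m}e^{-M(m\lambda_l^{\onu})}\|v_l\|_{\HS}\geq 1$, and define $\widehat{\phi}(l)$ on $F_m$ to be $e^{-M(m\lambda_l^{\onu})}$ times the normalised $\overline{v_l}$; monotonicity of $M(L\,\cdot)$ in $L$ shows this $\phi$ satisfies the Beurling decay for every $L$ (hence lies in $\Gamma_{(M_k)}(X)$ by the first part of the theorem), while $\sum_l |v_l\cdot\widehat{\phi}(l)|=\infty$, contradicting (ii). This is precisely the ``one point'' where the paper says the K\"othe theory of sequence spaces must be invoked (the computation of the K\"othe dual of the relevant echelon space), and it is the one ingredient your proposal is missing. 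A smaller remark: for (i)$\Leftrightarrow$(ii) you should also state the Beurling continuity estimate defining $\Gamma^{\prime}_{(M_k)}(X)$ (the seminorm bound holds for \emph{some} $\epsilon>0$ rather than for every $\epsilon$) and run the $|v(\overline{e^{j}_{l}})|$ estimate with \eqref{EQ:Weyl} as in Theorem \ref{THM:distribution}; merely observing that eigenfunctions are admissible test functions does not yield the quantitative bound (iv).
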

The proof of Theorem \ref{THM:Komatsu-b} is similar to the proof of the corresponding
results for the spaces $\Gamma_{\{M_k\}}(X)$, and we omit the repetition. The only
difference is that we need to use the K\"othe theory of sequence spaces at one
point, but this can be done analogous to \cite{Dasgupta-Ruzhansky:BSM}, so we 
may omit the details.
Finally, we note that given the characterisation of $\alpha$-duals, one can readily
proof that they are {\em perfect}, namely, that 
\begin{equation}\label{EQ:perfect}
\left[\Gamma_{\{M_k\}}(X)\right]=\left(\left[\Gamma_{\{M_k\}}(X)\right]^{\wedge}\right)^{\wedge}
\;\textrm{ and }\;
\left[\Gamma_{(M_k)}(X)\right]=\left(\left[\Gamma_{(M_k)}(X)\right]^{\wedge}\right)^{\wedge},
\end{equation}
see Definition \ref{DEF:alpha-dual} and condition \eqref{EQ:alpha-dual} for
their definition.
Again, once we have, for example, Theorem \ref{THM:Komatsu} and 
Theorem \ref{THM:alpha-duals}, the proof of \eqref{EQ:perfect} is purely
functional analytic and can be done almost identically to that in
\cite{Dasgupta-Ruzhansky:BSM}, therefore we will omit it.

\section{Proofs}
\label{SEC:proofs}

First we prove Theorem \ref{PROP:Komatsu} clarifying the definition of the
class $\Gamma_{\{M_{k}\}}(X)$. In the proof as well as in further proofs the
following estimate will be useful:
\begin{equation}\label{EQ:Weyl}
\|e_{l}^{j}\|_{L^{\infty}(X)}\leq C\lambda_{l}^{\frac{n-1}{2\nu}}
\textrm{ for all } l\geq 1.
\end{equation}
This estimate follows, for example, from the local Weyl law
\cite[Theorem 5.1]{Hormander:spectral-function-AM-1968}, see also 
\cite[Lemma 8.5]{Delgado-Ruzhansky:invariant}.

\begin{proof}[Proof of Theorem \ref{PROP:Komatsu}]
{\bf (i)} The statement would follow if we can show that for 
$E_{1}, E_{2}\in\Psi^{\nu}_{e}(X)$ there is a constant $A>0$ such that
\begin{equation}\label{EQ:ell1}
\|E_{1}^{k}\phi\|_{L^{2}(X)}\leq A^{k} k! \|E_{2}^{k}\phi\|_{L^{2}(X)}
\end{equation}
holds for all $k\in\N0$ and all $\phi\in C^{\infty}(X)$.
The estimate \eqref{EQ:ell1} follows from the fact that the pseudo-differential operator
$E_{1}^{k}\circ E_{2}^{-k}\in\Psi^{0}_{e}(X)$ is bounded on $L^{2}(X)$
(with $E_{2}^{-k}$ denoting the parametrix for $E_{2}^{k}$),
and by the Calderon-Vaillancourt theorem
its operator norm can be estimated by $A^{k}$ for some constant $A$
depending only on finitely many derivatives of symbols of $E_{1}$ and $E_{2}$.

{\bf (ii)} The equivalence between \eqref{EQ:def-Komatsu-Linf}
and \eqref{EQ:def-Komatsu-L2}
follows by embedding theorems but we give a
short argument for it in order to keep a more precise track of the appearing constants.
First we note that \eqref{EQ:def-Komatsu-Linf}
implies \eqref{EQ:def-Komatsu-L2} with a uniform constant
in view of the continuous embedding $L^{\infty}(X)\hookrightarrow L^{2}(X)$.
Conversely, suppose we have \eqref{EQ:def-Komatsu-L2}.
Let $\phi\in \Gamma_{\{M_{k}\}}(X)$. 
Then using \eqref{EQ:Weyl} we can estimate
\begin{eqnarray*}
\|\phi\|_{L^{\infty}(X)}&=& 
\|\sum_{j=0}^{\infty}\sum_{k=1}^{d_j} \widehat{\phi}(j,k)e_{j}^{k}\|_{L^{\infty}(X)}\nonumber\\
&\leq& \sum_{j=0}^{\infty}\sum_{k=1}^{d_j}
|\widehat{\phi}(j,k)| \, \|e_{j}^{k}\|_{L^{\infty}(X)}\nonumber\\
&\leq& C\|\widehat{\phi}(0)\|_{\HS}+C\sum_{j=1}^{\infty}\sum_{k=1}^{d_j}
|\widehat{\phi}(j,k)| \lambda_{j}^{\frac{n-1}{2\nu}}\nonumber\\&\leq& 
C\|\phi\|_{L^{2}(X)}+C\left(\sum_{j=1}^{\infty}\sum_{k=1}^{d_j}|\widehat{\phi}(j,k)| 
\lambda_{j}^{2\ell}\right)^{1/2}\left(\sum_{j=0}^{\infty}\sum_{k=1}^{d_j}
\lambda_{j}^{\frac{n-1}{\nu}-2\ell}\right)^{1/2},
\end{eqnarray*}
where we take $\ell$ large enough so that the very last sum converges,
see \eqref{EQ:dl2}.
This implies
\begin{multline}\label{EQ:aux1}
\|\phi\|_{L^{\infty}(X)}\leq C\|\phi\|_{L^{2}(X)}+ C^{\prime}\left(\sum_{j=1}^{\infty}\sum_{k=1}^{d_j}
|\widehat{\phi}(j,k)| \lambda_{j}^{2\ell}\right)^{1/2} \\
\leq C^{\prime}(\|\phi\|_{L^{2}(X)}+\|E^{\ell}\phi\|_{L^{2}(X)})
\end{multline}
by Plancherel's formula.
We note that \eqref{EQ:aux1} follows, in principle, also from the local Sobolev
embedding due to the ellipticity of $E$, however the proof above provides us
with a uniform constant.
Using \eqref{EQ:aux1} and (M.1) we can estimate
\begin{eqnarray*}
||E^{m}\phi||_{L^{\infty}(X)} 
&\leq& C||E^{m}\phi||_{L^{2}(X)}+C^{\prime}||E^{l+m}\phi||_{L^{2}(X)}\nonumber\\
&\leq& Ch^{\nu m}M_{\nu m}+C^{\prime \prime}h^{\nu(l+m)}M_{\nu(l+m)}\nonumber\\
&\leq& Ch^{\nu m}M_{\nu m}+ C^{\prime\prime\prime}h^{\nu(l+m)}h H^{\nu(l+m)-1}M_{\nu(l+m)-1}\nonumber\\
&\leq& {...}  \nonumber\\
&\leq& Ch^{\nu m}M_{\nu m}+ C^{\prime\prime\prime}h^{\nu(l+m)}h^{\nu l}H^{B_l+\nu lm}M_{\nu m}\nonumber\\
&\leq& C_{l} A^{\nu m} M_{\nu m}\nonumber
\end{eqnarray*}
for some $A$ independent of $m$, yielding \eqref{EQ:def-Komatsu-Linf}.

\medskip
{\bf (iv)} We note that the proof as in (ii) also shows the equivalence of \eqref{EQ:def-vf}
and \eqref{EQ:def-vf2}. Moreover, once we have condition \eqref{EQ:def-vf}, the 
statement {\bf (v)} follows by using $M_{k}\geq Ck!$ and the chain rule.

\medskip
{\bf (iii)}
Given properties (ii) and (iv), we need to show that
\eqref{EQ:def-Komatsu-L2} or \eqref{EQ:def-Komatsu-Linf}
are equivalent to \eqref{EQ:def-vf} or to \eqref{EQ:def-vf2}.
Using property (i), we can take $E=\sum_{j=1}^{N} \partial_{j}^{2}.$

To prove that \eqref{EQ:def-vf} implies \eqref{EQ:def-Komatsu-Linf} we use
the multinomial theorem\footnote{But we use it in the form 
adapted to the noncommutativity of vector fields, namely, although the coefficients are
all equal to one in the noncommutative form, the multinomial coefficient appears once
we make a choice for $\alpha=(\alpha_{1},\ldots,\alpha_{N}).$},
with the notation for multi-indices as in (iii).
With $Y_{j}\in\{\partial_{1},\ldots,\partial_{N}\}$, $1\leq j\leq |\alpha|$, and $\nu=2$,
we can estimate
\begin{eqnarray*}\label{EQ:estLG}
|(\sum_{j=1}^{N} \partial_{j}^{2})^{k}\phi(x)|&\leq& C\sum_{|\alpha|= k} \frac{k!}{\alpha!}\left| Y_1^{2}\ldots
Y_{|\alpha|}^{2}\phi(x)\right|\nonumber\\
&\leq& C \sum_{|\alpha|= k}\frac{k!}{\alpha!}A^{2|\alpha|}
M_{2|\alpha|}
\nonumber\\
&\leq& C A^{2k}M_{2k}
\sum_{|\alpha|= k}\frac{k! N^{|\alpha|}}{|\alpha|!} \nonumber\\
&\leq& C_{1} A^{2k}M_{2k} N^{k}  2^{k} \nonumber\\
&\leq& C_2 A_{1}^{2k} M_{2k},
\end{eqnarray*}
with $A_{1}=2NA$,
implying \eqref{EQ:def-Komatsu-Linf}.

Conversely, we argue that  \eqref{EQ:def-Komatsu-L2} implies \eqref{EQ:def-vf2}.
We write 
$\paal=P_{\alpha}\circ E^{k}$ with
$P_{\alpha}=\paal\circ E^{-k}.$
Here and below, in order to use precise calculus of pseudo-differential operators we 
may assume that we work on the space $L^{2}(X)\backslash H_{0}.$
The argument similar to that of (i) implies that there is a constant $A>0$ such that
$\|P_{\alpha}\phi\|_{L^{2}(X)}\leq A^{k} \|\phi\|_{L^{2}(X)}$ for all
$|\alpha|\leq \nu k$.
Therefore, we get
\begin{equation*}
\|\partial^{\alpha}\phi\|_{L^2}= \|P_{\alpha}\circ E^{k}\phi\|_{L^{2}}
\leq C A^{k} \| E^{k}\phi\|_{L^{2}}
\leq C' A^{k} h^{\nu k} M_{\nu k}
\leq C' A_1^{\nu k} M_{\nu k},
\end{equation*}
where we have used the assumption \eqref{EQ:def-Komatsu-L2}, and with $C'$ and 
$A_{1}=A^{1/\nu} h$ independent
of $k$ and $\alpha$. This completes the proof of (iii) and of the theorem.
\end{proof}

\begin{proof}[Proof of Theorem \ref{THM:Komatsu}]
{\bf ``Only if'' part.}
Let $\phi \in \Gamma_{\{M_k\}}(X).$ By the Plancherel formula
\eqref{EQ:Plancherel} we have
\begin{equation}\label{EQ:est1}
\|E^{m}\phi\|_{L^2(X)}^{2}
=\sum_{j}{\|\widehat{E^m\phi}(j)\|^{2}_{\HS}}
=\sum_{j} \|\lambda_{j}^{m}\widehat{\phi}(j)\|^{2}_{\HS}
=\sum_{j} \lambda_j^{2m} \|\widehat{\phi}(j)\|^{2}_{\HS}.
\end{equation}
Now since
$\|E^{m}\phi\|_{L^{2}(X)}\leq Ch^{\nu m}M_{\nu m}$, from \eqref{EQ:est1} we get
$$
\lambda_j^{m}\|\widehat{\phi}(j)\|_{\HS}\leq Ch^{\nu m}M_{\nu m}
$$ 
which implies
\begin{equation}\label{EQ:est2}
\|\widehat{\phi}(j)\|_{\HS}\leq Ch^{\nu m}M_{\nu m} \lambda_j^{-m}\quad
\textrm{ for all } \; j\geq 1.
\end{equation}
Now, from the definition of $M(r)$ it follows that  
\begin{equation}\label{EQ:est3i}
\inf_{k\in\mathbb N}r^{-\nu k}M_{\nu k}=\exp\left(-M(r)\right), \quad r>0.
\end{equation}
Indeed, this identity follows by writing 
$$
{\exp(M(r))}=\exp\left({\sup_k}\log\frac{r^{\nu k}}{M_{\nu k}}\right)
= \sup_k\left(\exp{\log}\frac{r^{\nu k}}{M_{\nu k}}\right)
= \sup_k\left(\frac{r^{\nu k}}{M_{\nu k}}\right)
$$
and using the identity
$\inf_{k}r^{-k}=\frac{1}{\sup_{k}r^{k}}.$
Setting $r=\frac{\lambda_j^{\onu}}{h}$, from \eqref{EQ:est2} and \eqref{EQ:est3i} we can
estimate
\begin{multline*}
\|\widehat{\phi}(j)\|_{\HS}
\leq C\inf_{m\geq 1}\left\{\frac{h^{\nu m}}{{\lambda_j^m}}M_{\nu m}\right\}
= C\inf_{m\geq 1}r^{-\nu m}M_{\nu m} \\
= C\exp\left(-M\left(\frac{\lambda_j^{\onu}}{h}\right)\right)
= C\exp\left(-M\left(L\lambda_j^{\onu}\right)\right),
\end{multline*}
where $L=h^{-1}.$

\medskip
\textbf{``If'' Part.}
Let $\phi \in C^{\infty}(X)$ be such that 
$$
\|\widehat{\phi}(j)\|_{\HS}\leq 
C\exp\left(-M\left(L \lambda_j^{\onu}\right)\right)
$$
holds for all $j\geq 1$.
Then by Plancherel's formula we have
\begin{eqnarray}
\|E^{m}\phi\|_{L^{2}(X)}^{2}&=& \sum_{j=0}^{\infty}\lambda_{j}^{2m}
\|\widehat{\phi}(j)\|^{2}_{\HS}\nonumber\\
&\leq& C\sum_{j=1}^{\infty}\lambda_{j}^{2m}
\exp\left(-M(L\lambda_j^{\onu})\right)\exp\left(-M(L\lambda_j^{\onu})\right).
\end{eqnarray}
Now we observe that
\begin{eqnarray}
\lambda_{j}^{2m}\exp\left(-M(L\lambda_j^{\onu})\right)\leq
\frac{\lambda_{j}^{2m}}{\sup_{p\in\mathbb N}
\frac{L^{\nu p}{\lambda_j^{p}}}{M_{\nu p}}}
\leq \frac{\lambda_{j}^{2m}}{L^{\nu p}{\lambda_j^{p}}}
{M_{\nu p}}
\end{eqnarray}
for any $p\in\mathbb N$.
Using this with $p=2m+1$, we get
$$
\lambda_{j}^{2m}\exp\left(-M(L\lambda_j^{\onu})\right)
\leq {\frac{1}{L^{\nu(2m+1)}}\left(\frac{\lambda_{j}^{2m}}
{{\lambda}_j^{2m+1}}\right)}M_{\nu(2m+1)}.
$$
Then, by this and property (M.1) of the sequence $\{M_{k}\}$, we get

\begin{eqnarray}\label{EQ:est3}
\|E^{m}\phi\|_{L^{2}(X)}^{2} &\leq& C \sum_{j=1}^{\infty}{\frac{1}{L^{\nu(2m+1)}}
\left(\frac{\lambda_{j}^{2m}}{{\lambda}_j^{2m+1}}\right)}M_{\nu(2m+1)}
\exp\left(-M(L\lambda_j^{\onu})\right)\nonumber\\
&\leq& C_1AH^{2\nu m}M_{2\nu m}\sum_{j=1}^{\infty}{\lambda}_j^{-1}
\exp\left(-M(L\lambda_j^{\onu})\right),
\end{eqnarray}
for some $A,H>0$.
Now we note that for all $j\geq 1$ we have

\begin{eqnarray}
{\lambda}_j^{-1}\exp\left(-M(L\lambda_j^{\onu})\right)\leq
\frac{{\lambda}_j^{-1}}{{\lambda}_j^p}\frac{M_{\nu p}}{L^{\nu p}}
=\frac{M_{\nu p}}{L^{\nu p}}\frac{1}{{\lambda}_j^{p+1}}.
\end{eqnarray} 
In particular, in view of \eqref{EQ:dl2}, for $p$ such that $p+1>n/\nu$ we obtain
$$
\sum_{j=1}^{\infty} \lambda_j^{-1}\exp\left(-M(L\lambda_j^{\onu}))\right)\leq 
\frac{M_{\nu p}}{L^{\nu p}}\sum_{j=1}^{\infty}
\frac{1}{{\lambda}_j^{p+1}}<\infty.
$$
This, (M.2) and \eqref{EQ:est3} imply
$$
\|E^{m}\phi\|_{L^{2}(X)}\leq 
\tilde{A} \tilde{H}^{\nu m}M_{\nu m}
$$ 
and hence $\phi\in \Gamma_{\{M_{k}\}}(X)$.
\end{proof}

Now we will check that Seeley's characterisation for analytic functions 
in \cite{see:exp}
follows from our theorem.
\begin{proof}[Proof of Corollary \ref{COR:Gevrey}]
The first part of the statement is a direct consequence of Theorem \ref{THM:Komatsu}
and \eqref{EQ:M-Gevrey}, so we only 
have to prove \eqref{EQ:analytic}.
Let $X$ be a compact manifold and $E$ an analytic, elliptic, positive differential 
operator of order $\nu.$ Let $\{\phi_k\}$ and $\{\lambda_k\}$ be respectively the eigenfunctions and eigenvalues of $E,$ i.e. $E\phi_k=\lambda_k\phi_k.$
As mentioned in the introduction, Seeley showed in \cite{see:exp}
that a $C^\infty$ function $f=\sum_{j}f_j\phi_j$ is analytic if and only if
there is a constant $C>0$ such that for all $k\geq 0$ we have
$$
\sum_{j} \lambda_j^{2k}|f_j|^2\leq ((\nu k)!)^2C^{2k+2}.
$$ 
By Plancherel's formula this means that
$$
\|E^{k}f\|^{2}_{L^2(X)}=\sum_{j} \lambda_j^{2k}|f_j|^{2}\leq ((\nu k)!)^2C^{2k+2}.
$$
For the class of analytic functions we can take
$M_{k}=k!$ in Definition \ref{DEF:Komatsu}, and then by
Theorem \ref{THM:Komatsu}
we conclude that $f$ is analytic if and only if
$$
\|\widehat{f}(j)\|_{\HS}\leq 
C\exp(-L\lambda_{j}^{\onu})
$$ 
or to
$$
|f_{j}|\leq 
C^{\prime}\exp(-L^{\prime}\lambda_{j}^{\onu}),
$$ 
with 
$M(r)=\sup_{p}\log\frac{r^{\nu p}}{(\nu p)!}\simeq r$ in view of \eqref{EQ:M-Gevrey}. 
This implies \eqref{EQ:analytic} and hence also Seeley's result \cite{see:exp} that 
$f=\sum_{j}f_j\phi_j$ is analytic if and only if 
the sequence $\{A^{\lambda_j^{\onu}}f_j\}$ is bounded for some $A>1$.
\end{proof}

\section{$\alpha$-duals}
\label{SEC:alpha}

In this section we characterise the $\alpha$-dual of the space
$\Gamma_{\{M_{k}\}}(X)$. This will be instrumental in proving the characterisation
for spaces of ultradistributions in Theorem \ref{THM:ultra}.

\begin{defi}\label{DEF:alpha-dual}
The $\alpha$-dual of the space $\Gamma_{\{M_{k}\}}(X)$ of ultradifferentiable functions, 
denoted by $[\Gamma_{\{M_{k}\}}(X)]^{\wedge},$ is defined as 
$$
\left\{v=(v_l)_{l\in \N0}: \sum_{l=0}^{\infty}\sum_{j=1}^{d_l}|(v_l)_{j}|
|\widehat{\phi}(l,j)|<\infty, v_{l}\in {\mathbb{C}}^{d_l},
\textrm{ for all } \phi\in \Gamma_{\{M_{k}\}}(X) \right\}.
$$
We will also write $v(l,j)=(v_{l})_{j}$ and
$\|v_{l}\|_{\HS}=(\sum_{j=1}^{d_{l}}|v(l,j)|^{2})^{1/2}.$
\end{defi}
It will be useful to have the definition of
the second dual $\left([\Gamma_{\{M_{k}\}}(X)]^{\wedge}\right)^{\wedge}$
as the space of $w=(w_l)_{l\in \N0}$, $w_{l}\in {\mathbb{C}}^{d_l}$ such that
\begin{equation}\label{EQ:alpha-dual}
\sum_{l=0}^{\infty}\sum_{j=1}^{d_l}|(w_l)_{j}| |(v_l)_{j}|
<\infty\quad
\textrm{ for all } v\in [\Gamma_{\{M_{k}\}}(X)]^{\wedge}.
\end{equation}
We have the following characterisations of the $\alpha$-duals.

\begin{theorem}\label{THM:alpha-duals}
Assume conditions {\rm (M.0), (M.1)} and {\rm (M.2)}.
The following statements are equivalent.
\begin{itemize}
\item[(i)]  $v\in [\Gamma_{\{M_{k}\}}(X)]^{\wedge};$
\item[(ii)] for every $L>0$ we have 
$$
\sum_{l=1}^{\infty}
\exp \left(-M(L \lambda_l^{\onu})\right)\|v_l\|_{\HS}<\infty;
$$
\item[(iii)] for every $L>0$ there exists $K=K_{L}>0$ such that 
$$
\|v_l\|_{\HS}\leq K\exp\left(M(L\lambda_l^{\onu})\right)
$$
holds for all $l\in\mathbb N$.
\end{itemize}
\end{theorem}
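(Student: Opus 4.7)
The equivalences split into two halves: $(ii)\Leftrightarrow(iii)$ is a purely scalar statement about the weights $\exp(-M(L\lambda_{l}^{1/\nu}))$ and depends on the hypothesis (M.2), while $(i)\Leftrightarrow(ii)$ is a duality argument built on Theorem~\ref{THM:Komatsu}. I would prove the cycle $(ii)\Rightarrow(iii)\Rightarrow(ii)$ first, then $(ii)\Rightarrow(i)\Rightarrow(ii)$.

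The implication $(ii)\Rightarrow(iii)$ is immediate, since a convergent series has bounded terms, giving $\exp(-M(L\lambda_{l}^{1/\nu}))\|v_l\|_{\HS}\leq K_L$ for every $L>0$. For $(iii)\Rightarrow(ii)$ the key step is to establish the self-concentration inequality
\[
\exp(M(r))^{2}\leq A\exp(M(Hr)),\qquad r>0,
\]
which I derive by restricting the supremum in $\exp(M(Hr))=\sup_{k}(Hr)^{\nu k}/M_{\nu k}$ to even indices $k=2m$ and applying (M.2) in the form $M_{2\nu m}\leq AH^{2\nu m}M_{\nu m}^{2}$; equivalently $2M(L\lambda_{l}^{1/\nu}/H)\leq M(L\lambda_{l}^{1/\nu})+\log A$. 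Given a target $L>0$ in (ii), I invoke (iii) with $L':=L/H$ to get $\|v_l\|_{\HS}\leq K'\exp(\tfrac12 M(L\lambda_{l}^{1/\nu}))$, and hence
\[
\exp(-M(L\lambda_{l}^{1/\nu}))\,\|v_l\|_{\HS}\leq K'\exp\!\bigl(-\tfrac12 M(L\lambda_{l}^{1/\nu})\bigr).
\]
By \eqref{EQ:est-ev} (with $\delta=1/2$) the right-hand side is dominated by an arbitrarily large negative power of $\lambda_l$, and summability over $l$ then follows from \eqref{EQ:dl2}.

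For $(ii)\Rightarrow(i)$ I take any $\phi\in\Gamma_{\{M_k\}}(X)$, apply Theorem~\ref{THM:Komatsu} to produce $C,L>0$ with $\|\widehat\phi(l)\|_{\HS}\leq C\exp(-M(L\lambda_{l}^{1/\nu}))$, and bound by Cauchy--Schwarz inside each block $\mathbb{C}^{d_l}$:
\[
\sum_{l,j}|(v_l)_j|\,|\widehat\phi(l,j)|\leq\sum_{l}\|v_l\|_{\HS}\,\|\widehat\phi(l)\|_{\HS}\leq C\sum_{l}\|v_l\|_{\HS}\exp(-M(L\lambda_{l}^{1/\nu})),
\]
which is finite by (ii). For the converse $(i)\Rightarrow(ii)$ I argue by contradiction: assuming (ii) fails at some $L_0>0$, I construct a test function by setting
\[
\widehat\phi(l,j):=\frac{\overline{(v_l)_j}}{\|v_l\|_{\HS}}\exp\!\bigl(-M(L_0\lambda_l^{1/\nu})\bigr)\qquad\text{when }v_l\neq 0,
\]
and $\widehat\phi(l,j):=0$ otherwise (including at $l=0$). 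The resulting Fourier coefficients satisfy $\|\widehat\phi(l)\|_{\HS}\leq\exp(-M(L_0\lambda_l^{1/\nu}))$, which by \eqref{EQ:est-ev} decays faster than any polynomial in $\lambda_l$, so $\phi\in C^{\infty}(X)$ via \eqref{EQ:smooth}, and Theorem~\ref{THM:Komatsu} then places $\phi$ in $\Gamma_{\{M_k\}}(X)$. A direct computation nevertheless gives $\sum_{l,j}|(v_l)_j|\,|\widehat\phi(l,j)|=\sum_{l}\|v_l\|_{\HS}\exp(-M(L_0\lambda_l^{1/\nu}))=\infty$, contradicting $v\in[\Gamma_{\{M_k\}}(X)]^{\wedge}$.

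The main obstacle I anticipate is the self-concentration inequality and its use in $(iii)\Rightarrow(ii)$; this is the only step that genuinely invokes the structural hypothesis (M.2), and without it one cannot upgrade a bound with $L'$ to summability against a weight with any $L>L'$. The remaining implications reduce to Theorem~\ref{THM:Komatsu} together with elementary Cauchy--Schwarz and the construction of a single explicit test function.
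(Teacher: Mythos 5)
Your proof is correct, and most of it runs along the same lines as the paper's: the implication $(ii)\Rightarrow(iii)$ by boundedness of terms of a convergent series, $(ii)\Rightarrow(i)$ by Cauchy--Schwarz blockwise together with Theorem \ref{THM:Komatsu}, and $(iii)\Rightarrow(ii)$ via exactly the halving inequality that the paper records as \eqref{EQ:exps} (your ``self-concentration inequality'' $\exp(M(r))^{2}\leq A\exp(M(Hr))$ is the multiplicative form of it, derived from (M.2) in the same way, and the summability then follows from \eqref{EQ:est-ev} and \eqref{EQ:dl2} just as in the paper). The one place where you genuinely diverge is $(i)\Rightarrow(ii)$. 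The paper takes the test function with all coordinates equal, $\widehat\phi(l,j)=e^{-M(L\lambda_l^{1/\nu})}$, which forces $\|\widehat\phi(l)\|_{\HS}=d_l^{1/2}e^{-M(L\lambda_l^{1/\nu})}$; the extra factor $d_l^{1/2}$ then has to be absorbed using the Weyl-type bound \eqref{EQ:dl1}, the estimate \eqref{EQ:est-ev}, and a second application of the (M.2)-based inequality \eqref{EQ:exps} to trade $\frac12 M(L\,\cdot)$ for $M(L_2\,\cdot)$. You instead align the test function with $v_l$ and normalise, so that $\|\widehat\phi(l)\|_{\HS}$ equals the weight $e^{-M(L_0\lambda_l^{1/\nu})}$ exactly and $\sum_j|(v_l)_j||\widehat\phi(l,j)|=\|v_l\|_{\HS}e^{-M(L_0\lambda_l^{1/\nu})}$ with no Cauchy--Schwarz loss; Theorem \ref{THM:Komatsu} then applies immediately with the same $L_0$ and no use of (M.2) in this step. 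This is a modest but real simplification (the contradiction framing is inessential --- the same computation gives the direct implication for every $L_0$), at the cost of a slightly less explicit test function; both arguments need the smoothness check via \eqref{EQ:est-ev} and \eqref{EQ:smooth}, which you supply.
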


\begin{proof} 
{\bf (i) $\Longrightarrow$ (ii).}
Let $v\in [\Gamma_{\{M_{k}\}}(X)]^{\wedge}$, $L>0$, and let $\phi\in C^{\infty}(X)$ be such that
$$
\widehat{\phi}(l,j)=e^{-M(L\lambda_l^{\onu})}.
$$
We claim that $\phi\in\Gamma_{\{M_k\}}(X).$
First, using \eqref{EQ:dl1}, for some $q$ we have
\begin{eqnarray}\label{EQ:est4}
\|\widehat{\phi}(l)\|_{\HS}= d_{l}^{1/2}e^{-M(L\lambda_l^{\onu})}
\leq C\lambda_l^{q}e^{-\frac{1}{2}M\left(L\lambda_l^{\onu}\right)}
e^{-\frac{1}{2}M\left(L\lambda_l^{\onu}\right)}
\end{eqnarray}
for all $l\geq 1$.
Estimates \eqref{EQ:est-ev} and \eqref{EQ:est4} imply
that $\lambda_l^{q}e^{-\frac{1}{2}M\left(L\lambda_l^{\onu}\right)}\leq C$ and hence
$$
\|\widehat{\phi}(l)\|_{\HS}\leq 
C^{\prime}e^{-\frac{1}{2}M\left(L\lambda_l^{\onu}\right)}
$$
holds for all $l\geq 1$. The claim would follow if we can show that
\begin{equation}\label{EQ:exps}
e^{-\frac{1}{2}M\left(L\lambda_l^{\onu}\right)}\leq e^{-M\left(L_{2}\lambda_l^{\onu}\right)}
\textrm{ holds for } L_{2}=\frac{L}{\sqrt{A} H},
\end{equation}
where $A$ and $H$ are constants in the condition (M.2).
Now, substituting $p=2q$, we note that 
\begin{equation}\label{EQ:exp1}
e^{-\frac{1}{2}M\left(L\lambda_l^{\onu}\right)}=\inf_{p\in\mathbb N}
\frac{M^{1/2}_{\nu p}}{L^{\nu p/2}{\lambda_l^{p/2}}}
\leq 
\inf_{q\in\mathbb N}\frac{M^{1/2}_{2\nu q}}{L^{\nu q}{\lambda_l^{q}}}.
\end{equation}
Using property (M.2) we can estimate
$$M_{2\nu q}\leq A H^{2\nu k}M_{\nu q}^{2}.$$
This and \eqref{EQ:exp1} imply
$$
e^{-\frac{1}{2}M\left(L\lambda_l^{\onu}\right)}\leq 
\frac{M_{\nu q}}{L_2^{\nu q}{\lambda_l^{q}}},
$$ 
where $L_{2}=\frac{L}{\sqrt{A} H}.$
Taking infimum in $q\in\mathbb N$, we obtain
$$
e^{-\frac{1}{2}M\left(L\lambda_l^{\onu}\right)}\leq
\inf_{q\in \mathbb N} 
\frac{M_{\nu q}}{L_2^{\nu q}{\lambda_l^{q}}}= 
e^{-M(L_2 \lambda_{l}^{\onu})}. 
$$
Therefore, we get the estimate
$$
\|\widehat{\phi}(l)\|_{\HS}\leq C^{\prime}\exp\left(-M(L_2 \lambda_l^{\onu})\right),
$$
which means that $\phi\in\Gamma_{\{M_k\}}(X)$ by Theorem \ref{THM:Komatsu}.
Finally, this implies that 
\begin{eqnarray*}
\sum_{l}e^{-M(L\lambda_l^{\onu})}\|v_l\|_{\HS}&\leq& \sum_{l}
\sum_{j=1}^{d_{l}} e^{-M(L \lambda_l^{\onu})} |v_{l}(j)|\nonumber\\
&=& \sum_{l}\sum_{j=1}^{d_l}|\widehat{\phi}(l,j)| |v(l,j)|<\infty
\end{eqnarray*}
is finite by property (i), implying (ii).

\medskip
\textbf{(ii) $\Longrightarrow$ (i).}
Let $\phi\in\Gamma_{\{M_k\}}(X).$ Then by Theorem \ref{THM:Komatsu}
there exists $L>0$ such that
$$
\|\widehat{\phi}(l)\|_{\HS}\leq 
C\exp\left(-M(L\lambda_l^{\onu})\right).
$$
Then we can estimate 
\begin{eqnarray*}
\sum_{l=0}^{\infty}
\sum_{j=1}^{d_l}
|(v_l)_{j}| |\widehat{\phi}(l,j)| &\leq& \sum_{l=0}^{\infty}
\|v_l\|_{\HS} \|\widehat{\phi}(l)\|_{\HS}\nonumber\\
&\leq& C\sum_{l=0}^{\infty}
\exp \left(-M(L\lambda_l^{\onu})\right) \|v_l \|_{\HS}<\infty
\end{eqnarray*}
is finite by the assumption (ii).
This implies
$v\in [\Gamma_{\{M_k\}}(X)]^{\wedge}.$

 \medskip 
 \textbf{(ii) $\Longrightarrow$ (iii).}
 We know that for every $L>0$ we have
 $$
 \sum_{l}\exp \left(-M(L\lambda_l^{\onu})\right)\|v_l\|_{\HS}<\infty.
 $$
 Consequently, there exists $K_L$ such that 
 $$
 \exp \left(-M(L\lambda_l^{\onu})\right)\|v_l\|_{\HS}\leq K_L
 $$ 
 holds for all $l$,
 which implies (iii).

\medskip 
{\bf (iii) $\Longrightarrow$ (ii).}
Let $L>0$. Let us define $L_{2}$ as in 
\eqref{EQ:exps}. If $v$ satisfies (iii) this means, in particular,
that there exists $K=K_{L_{2}}>0$ such that
\begin{equation}\label{EQ:vl1}
\|v_l\|_{\HS}\leq 
K\exp\left(M(L_{2}\lambda_l^{\onu})\right).
\end{equation}
We also note that by \eqref{EQ:exp1} we have
$$
\exp\left(-\frac{1}{2}M(L\lambda_l^{\onu})\right)\leq 
\frac{M_{\nu p}^{1/2}}{L^{\nu p/2}\lambda_l^{p/2}}
\quad \textrm{ for all } p\in\mathbb N.
$$
From this, \eqref{EQ:exps} and \eqref{EQ:vl1} we conclude
 \begin{eqnarray*} 
 & & \sum_{l=0}^{\infty}
\exp \left(-M(L \lambda_l^{\onu})\right)\|v_l\|_{\HS}  \\
&\leq&
\sum_{l=0}^{\infty}
\exp \left(-\frac12 M(L \lambda_l^{\onu})\right)\exp\left(-\frac12 M(L\lambda_l^{\onu})\right) 
\|v_l\|_{\HS} \\
&\leq&
\sum_{l=0}^{\infty}
\exp \left(-\frac12 M(L \lambda_l^{\onu})\right)\exp\left(- M(L_{2}\lambda_l^{\onu})\right) 
\|v_l\|_{\HS} \\
  &\leq& K\sum_{l=0}^{\infty}\exp\left(-\frac{1}{2}M(L\lambda_l^{\onu})\right)\nonumber\\
 &\leq& K+K\sum_{l=1}^{\infty}\frac{M_{\nu p}^{1/2}}{L^{\nu p/2}\lambda_l^{p/2}}\nonumber\\
 &\leq& K+C_{p}\sum_{l=1}^{\infty} \frac{1}{\lambda_l^{p/2}}<\infty
 \end{eqnarray*}
 is finite provided we take $p$ large enough in view of \eqref{EQ:dl2}.
\end{proof}

\section{Ultradistributions}
\label{SEC:ultradistributions}

In this section we prove that the spaces of ultradistributions and $\alpha$-duals
coincide. Together with Theorem \ref{THM:alpha-duals} this implies 
Theorem \ref{THM:ultra}.

\begin{theorem}\label{THM:distribution}
Assume conditions {\rm (M.0), (M.1)} and {\rm (M.2)}.
We have $v\in {\Gamma^{\prime}_{\{M_k\}}}(X)$ if and only if 
$v\in [\Gamma_{\{M_k\}}(X)]^{\wedge}.$
\end{theorem}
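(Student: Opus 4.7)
The strategy is to show that a linear form $u\in\Gamma'_{\{M_k\}}(X)$ iff its Fourier coefficient sequence $\widehat u=(v_l)_l$ satisfies condition (iii) of Theorem~\ref{THM:alpha-duals}, which by that theorem coincides with membership in $[\Gamma_{\{M_k\}}(X)]^{\wedge}$. By Theorem~\ref{PROP:Komatsu}\,(i) I may assume without loss of generality that $E$ commutes with complex conjugation (e.g.\ a power of the Laplace--Beltrami operator of some Riemannian metric), so that $E^{k}\overline{e_l^j}=\lambda_l^{k}\overline{e_l^j}$ throughout, and that $\{\overline{e_l^j}\}_{j}$ is again an orthonormal basis of $H_l$.

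For the direction $u\in\Gamma'_{\{M_k\}}(X)\Rightarrow \widehat u\in[\Gamma_{\{M_k\}}(X)]^{\wedge}$, I evaluate the defining seminorm inequality on the test function $\phi=\overline{e_l^j}$. Using $E^{k}\overline{e_l^j}=\lambda_l^{k}\overline{e_l^j}$, the local Weyl bound \eqref{EQ:Weyl}, and the identity $\sup_{k}r^{\nu k}/M_{\nu k}=\exp M(r)$, this produces
\[
 |\widehat u(l,j)|=|u(\overline{e_l^j})|\leq C_\epsilon\,\lambda_l^{(n-1)/(2\nu)}\exp M(\epsilon^{\onu}\lambda_l^{\onu})\qquad\text{for every }\epsilon>0.
\]
Combining with the dimension estimate \eqref{EQ:dl1} upgrades this to $\|\widehat u(l)\|_{\HS}\leq C'_{\epsilon}\,\lambda_l^{(2n-1)/(2\nu)}\exp M(\epsilon^{\onu}\lambda_l^{\onu})$. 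Given a prescribed $L>0$, I choose $\epsilon$ with $\epsilon^{\onu}$ sufficiently small, then absorb the polynomial prefactor into $\exp M(L\lambda_l^{\onu})$ by means of \eqref{EQ:est-ev} together with the doubling-type inequality $2M(L_{2}r)\leq M(Lr)$ (with $L_{2}=L/(\sqrt A H)$) established in \eqref{EQ:exps}; both rely crucially on (M.1) and (M.2). This produces exactly condition (iii) of Theorem~\ref{THM:alpha-duals}.

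For the converse, given $v\in[\Gamma_{\{M_k\}}(X)]^{\wedge}$, I define the linear form
\[
 u(\phi):=\sum_{l=0}^{\infty}\sum_{j=1}^{d_l} v(l,j)\int_{X}\phi(x)\,e_l^{j}(x)\,dx,
\]
which is manifestly linear and, by orthonormality, satisfies $u(\overline{e_l^j})=v(l,j)$. Cauchy--Schwarz over $j$, together with the observation that $\sum_{j}|\int\phi e_l^j|^2=\|\widehat\phi(l)\|_{\HS}^{2}$ (since $\{\overline{e_l^j}\}_{j}$ is an orthonormal basis of $H_l$), gives $|u(\phi)|\leq\sum_{l}\|v_l\|_{\HS}\|\widehat\phi(l)\|_{\HS}$. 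By Plancherel's formula \eqref{EQ:Plancherel}, $\lambda_l^{k}\|\widehat\phi(l)\|_{\HS}\leq\|E^{k}\phi\|_{L^{2}}\leq|X|^{1/2}\|E^{k}\phi\|_{L^{\infty}}$ for every $k$; multiplying by $\epsilon^{k}M_{\nu k}^{-1}$, taking the infimum in $k$, and using \eqref{EQ:est3i} yields
\[
 \|\widehat\phi(l)\|_{\HS}\leq|X|^{1/2}\,\exp\!\bigl(-M(\epsilon^{\onu}\lambda_l^{\onu})\bigr)\cdot\sup_{k}\epsilon^{k}M_{\nu k}^{-1}\|E^{k}\phi\|_{L^{\infty}}.
\]
Summing over $l$ and invoking condition (ii) of Theorem~\ref{THM:alpha-duals} with $L=\epsilon^{\onu}$ to control $\sum_{l}\|v_l\|_{\HS}\exp(-M(\epsilon^{\onu}\lambda_l^{\onu}))$, I recover precisely the continuity estimate in the definition of $\Gamma'_{\{M_k\}}(X)$.

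The main obstacle lies in the forward direction: converting the pointwise bound with the mixed factor $\lambda_l^{(2n-1)/(2\nu)}\exp M(\epsilon^{\onu}\lambda_l^{\onu})$ into the cleaner form $K_L\exp M(L\lambda_l^{\onu})$ for an arbitrary prescribed $L>0$. This requires the combined use of the polynomial-decay estimate \eqref{EQ:est-ev} and the observation that $M$ at least doubles under a bounded rescaling of its argument (the key content of \eqref{EQ:exps}), and thus consumes the full strength of both hypotheses (M.1) and (M.2). The converse direction, by contrast, is essentially a two-line computation once Theorem~\ref{THM:alpha-duals}(ii) is invoked.
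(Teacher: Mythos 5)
Your proposal is correct and follows essentially the same route as the paper: both directions reduce to the $\alpha$-dual characterisation of Theorem \ref{THM:alpha-duals}, and your forward implication — testing $u$ against $\overline{e_l^j}$, invoking the Weyl bound \eqref{EQ:Weyl}, and absorbing the polynomial factor through (M.1), \eqref{EQ:est-ev} and the doubling inequality \eqref{EQ:exps} — is exactly the paper's ``only if'' argument up to the order in which the factor $\lambda_l^{(n-1)/(2\nu)}$ is absorbed. The only (harmless) divergence is in the converse: you verify the defining seminorm inequality of $\Gamma^{\prime}_{\{M_k\}}(X)$ directly, via Cauchy--Schwarz and the bound $\|\widehat{\phi}(l)\|_{\HS}\le |X|^{1/2}e^{-M(\epsilon^{\onu}\lambda_l^{\onu})}\sup_{k}\epsilon^{k}M_{\nu k}^{-1}\|E^{k}\phi\|_{L^{\infty}}$ combined with Theorem \ref{THM:alpha-duals}(ii), whereas the paper establishes well-definedness and then sequential continuity by re-running the proof of Theorem \ref{THM:Komatsu} with a uniform constant; your version is, if anything, more tightly matched to the stated definition.
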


\begin{proof} 
\textbf{``If'' Part.}
Let $v\in [\Gamma_{\{M_k\}}(X)]^{\wedge}.$ 
For any $\phi\in \Gamma_{\{M_k\}}(X)$ let us 
define
$$
v(\phi):=\sum_{l=0}^{\infty}\widehat{\phi}(l)\cdot v_l=
\sum_{l=0}^{\infty}\sum_{j=1}^{d_{l}} \widehat{\phi}(l,j) v_l(j).
$$ 
Given $\phi\in \Gamma_{\{M_k\}}(X)$, by Theorem \ref{THM:Komatsu} there exist
$C>0$ and $L>0$ such that
$$
\|\widehat{\phi}(l)\|_{\HS}\leq 
Ce^{-M(L\lambda_l^{\onu})}.
$$
Consequently, by Theorem \ref{THM:alpha-duals}, we get
\begin{eqnarray*}
|v(\phi)|\leq \sum_{l}\|\widehat{\phi}(l)\|_{\HS} \|v_l\|_{\HS}
\leq C\sum_{l}e^{-M(L\lambda_l^{\onu})}\|v_l\|_{\HS}<\infty,
\end{eqnarray*}
which means that $v(\phi)$ is a well-defined linear functional. 
Next we check that $v$ is continuous.
Suppose $\phi_j\rightarrow\phi$ in $\Gamma_{\{M_k\}}(X)$ as $j\rightarrow\infty,$ 
which means that 
$$
\sup_{\alpha}\epsilon^{|\alpha|}M^{-1}_{\nu |\alpha|}\sup_{x\in X}
|E^{|\alpha|}\left(\phi_j(x)-\phi(x)\right)|\rightarrow 0
\textrm{ as } j\rightarrow\infty.
$$
It follows that 
$$
\|E^{|\alpha|}\left(\phi_j(x)-\phi(x)\right)\|_{L^{\infty}(X)}\leq 
C_j A^{|\alpha|}M_{\nu |\alpha|},
$$
where $C_j\rightarrow 0$ as $j\rightarrow \infty.$
From the proof of Theorem \ref{THM:Komatsu} it follows that 
$$
\|\widehat{\phi_j}(l)-\widehat{\phi}(l)\|_{\HS}\leq C_{j}^{\prime}e^{-M(L\lambda_l^{\onu})}
$$
with $C_{j}^{\prime}\to 0$.
Hence 
$$
|v\left(\phi_j-\phi\right)|\leq
\sum_{l}\|\widehat{\phi_j}(l)-\widehat{\phi}(l)\|_{\HS} \|v_l\|_{\HS} \leq
C_{j}^{\prime} \sum_{l}e^{-M(L\lambda_l^{\onu})} \|v_l\|_{\HS}
\rightarrow 0
$$ 
as $j\rightarrow\infty$. This implies $v\in  {\Gamma^{\prime}_{\{M_k\}}}(X).$

\medskip
\textbf{``Only if'' Part.}
Let $v\in  {\Gamma^{\prime}_{\{M_k\}}}(X).$ By definition, this implies
$$
|v(\phi)|\leq C_{\epsilon}\sup_{\alpha}\epsilon^{|\alpha|}
M^{-1}_{\nu |\alpha|}\sup_{x\in X}|E^{|\alpha|}\phi(x)|
$$ 
for all $\phi\in\Gamma_{\{M_k\}}(X).$ In particular,
\begin{eqnarray}
|v(\overline{e^{j}_l})|&\leq& C_{\epsilon}\sup_{\alpha}\epsilon^{|\alpha|}M^{-1}_{\nu|\alpha|}\sup_{x\in X}|E^{|\alpha|}\overline{e^{j}_{l}}(x)|\nonumber\\
&\leq& C_{\epsilon}\sup_{\alpha}\epsilon^{|\alpha|}M^{-1}_{\nu|\alpha|}{\lambda}_l^{|\alpha|}\sup_{x\in X}|\overline{e^{j}_{l}}(x)|\nonumber\\
&\leq&C_{\epsilon}\sup_{\alpha}
\frac{\epsilon^{|\alpha|}
{\lambda}_l^{|\alpha|+\frac{n-1}{2\nu}}}{M_{\nu|\alpha|}}.\nonumber
\end{eqnarray}
Here in the last line we used the estimate \eqref{EQ:Weyl}.
Consequently, we get
\begin{eqnarray}\label{EQ:estM}
|v(\overline{e^{j}_l})|\leq C_{\epsilon}
{\lambda}_l^{\frac{n-1}{2\nu}}
\sup_{\alpha}\frac{\epsilon^{|\alpha|}
{\lambda}_l^{|\alpha|}}{M_{\nu|\alpha|}}\leq
C_{\epsilon}^{\prime}
\sup_{\alpha}\frac{\epsilon^{|\alpha|}
{\lambda}_l^{|\alpha|+k}}{M_{\nu |\alpha|}}
\end{eqnarray}
with $k:=[\frac{n-1}{2\nu}]+1.$
By property (M.1) of the sequence $\{M_{k}\}$ we can estimate
\begin{eqnarray*}
M_{\nu(|\alpha|+k)}&\leq& A H^{\nu(|\alpha|+k)-1}M_{\nu(|\alpha|+k)-1}\nonumber\\
&\leq& A^{2} H^{2\nu(|\alpha|+k)-1-2}M_{\nu(|\alpha|+k)-2}\nonumber\\
&\vdots&\\
&\leq& A^{\nu k}H^{\nu k|\alpha|}H^{f(k)}M_{\nu|\alpha|},\nonumber
\end{eqnarray*}
for some $f(k)=f(\nu,k)$ independent of $\alpha$.
This implies
$$
M^{-1}_{\nu |\alpha|}\leq A^{\nu k}H^{f(k)}H^{\nu k|\alpha|}M^{-1}_{\nu(|\alpha|+k)}.
$$
This and \eqref{EQ:estM} imply
\begin{multline}
|v(\overline{e^{j}_l})|
\leq C^{\prime}_{\epsilon}\
\epsilon^{-k}A^{\nu k} H^{f(k)}\sup_{\alpha}
\frac{\epsilon^{|\alpha|+k}(H^{\nu k})^{|\alpha|}
\lambda_l^{|\alpha|+k}}{M_{\nu(|\alpha|+k})} \\
\leq C^{\prime}_{\epsilon,k,A}\sup_{\alpha}
\frac{(\epsilon^{1/\nu} H^{k})^{\nu(|\alpha|+k)}
\lambda_l^{|\alpha|+k}}{M_{\nu(|\alpha|+k)}}
\leq
C^{\prime}_{\epsilon,k,A} e^{M(L\lambda_{l}^{\onu})},
\end{multline}
with $L=\epsilon^{\onu} H^{k}$.
At the same time, it follows from \eqref{EQ:exps} that 
\begin{equation*}\label{EQ:exps2}
e^{M\left(L\lambda_l^{\onu}\right)} \leq
e^{\frac{1}{2}M\left(L_{3}\lambda_l^{\onu}\right)}
\textrm{ holds for } L=\frac{L_{3}}{\sqrt{A} H}.
\end{equation*}
This and \eqref{EQ:dl1}
for some $q$, and then \eqref{EQ:est-ev} imply 
$$
\|\widehat{v}(e_l)\|_{\HS}\leq C d_{l}^{1/2} e^{M(L\lambda_l^{\onu})}
\leq C \lambda_{l}^{q} e^{\frac{1}{2}M(L_{3}\lambda_l^{\onu})}
\leq C e^{M(L_{3}\lambda_l^{\onu})},
$$
that is 
$v\in \left[\Gamma_{\{M_k\}}(X)\right]^{\wedge}$
by Theorem \ref{THM:alpha-duals}.
\end{proof}

\section*{Acknowledgement}
The authors would like to thank Jens Wirth for a discussion. The authors would also like to thank Jasson Vindas for discussions and comments.

\end{document}